\numberwithin{equation}{section}
\DeclareMathOperator{\Ram}{Ram}
\DeclareMathOperator{\Irr}{Irr}
\DeclareMathOperator{\Slope}{Slope}
\DeclareMathOperator{\Id}{Id}
\DeclareMathOperator{\Aut}{Aut}
\DeclareMathOperator{\GL}{GL}
\newcommand{\IC}{\mathbb C}
\newcommand{\cir}[1]{\langle #1 \rangle}
\begin{document}

\renewcommand{\proofname}{Proof}
\renewcommand{\Re}{\operatorname{Re}}
\renewcommand{\Im}{\operatorname{Im}}
\renewcommand{\labelitemi}{$\bullet$}

\newtheorem{theorem}{Theorem}[section]
\newtheorem{proposition}[theorem]{Proposition}
\newtheorem{lemma}[theorem]{Lemma}
\newtheorem{corollary}[theorem]{Corollary}
\newtheorem{conjecture}[theorem]{Conjecture}

\theoremstyle{definition}
\newtheorem{definition}[theorem]{Definition}
\newtheorem{example}{Example}[section]
\theoremstyle{remark}
\newtheorem{remark}{Remark}[section]
\newtheorem*{claim}{Claim}


\title{Simplification of exponential factors of irregular connections on $\mathbb P^1$}

\author{Jean Douçot}
\address{"Simion Stoilow" Institute of mathematics of the Romanian Academy, Calea Griviței 21,
010702-Bucharest, Sector 1, Romania}
\email{jeandoucot@gmail.com}

\maketitle

\begin{abstract}
We give an explicit algorithm to reduce the ramification order of any exponential factor of an irregular connection on $\mathbb P^1$, using the same types of basic operations as in the Katz-Deligne-Arinkin algorithm for rigid irregular connections. The exponential factor reached when the algorithm terminates is, up  to admissible deformations, the unique factor with minimal ramification order in the orbit of the initial factor under successive applications of basic operations. Furthermore, we show that for every even integer $n\geq 0$, there is up to admissible deformations a finite number of non-simplifiable exponential factors at infinity such that the corresponding elementary wild character variety has complex dimension $n$, which conjecturally implies that there is a finite number of isomorphism classes of elementary wild character varieties in any dimension. These results can be viewed as saying that the set of all possible level data of exponential factors has the structure of a disjoint union of an infinite number of infinite rooted trees, each tree being associated to a given dimension $n$ and with a finite number of trees for each $n$. In particular, in dimension 2 there is a unique tree, corresponding to the Painlevé I moduli space.
\end{abstract}

\section{Introduction}

\subsection{General context}

In this article, we continue our recent investigations motivated by the question of the classification of wild character varieties in genus zero. 

In previous works \cite{doucot2021diagrams, boalch2022twisted, doucot2024basic, doucot2024moduli}, we defined several combinatorial invariants attached to an arbitrary irregular connection  $(E,\nabla)$ on $\mathbb P^1$. These invariants only depend on the connection through its \emph{formal data} $(\bm\Theta, \bm{\mathcal C})$, which consist of a collection $\bm{\Theta}$ of \emph {exponential factors}, or \emph{Stokes circles} $I_i=\cir{q_i}$, encoding the exponential terms $e^{q_i}$ appearing in the horizontal sections of the connection near its singularities, each Stokes circle coming with an integer multiplicity $n_i\geq 1$, and the datum $\bm{\mathcal C}$ of a conjugacy class $\mathcal C_i\subset \GL_{n_i}(\mathbb C) $ for each Stokes circle $I_i$, its \emph{formal monodromy}.  

The invariants are: a \emph{fission forest} \cite{boalch2022twisted}, characterizing these formal data up to \emph{admissible deformations} \cite{boalch2014geometry, doucot2022local, doucot2023topology}, a \emph{diagram} \cite{doucot2021diagrams}, generalizing several works \cite{okamoto1992painleve,crawley2001geometry,boalch2008irregular, boalch2012simply,hiroe2014moduli,hiroe2017linear,boalch2016global, boalch2020diagrams} establishing links between moduli spaces of connections and  quiver varieties, and a \emph{short fission tree} \cite{doucot2024basic}, which is a refined invariant containing strictly more information than the diagram. \\

On the other hand, there exist various types of natural operations on (irreducible) irregular connections on $\mathbb P^1$. We will consider here the following types of \textit{basic operations}:
\begin{itemize}
\item The Fourier transform;
\item Twists, consisting of tensoring $(E,\nabla)$ by a given rank one connection on $\mathbb P^1$;
\item Möbius transformations.
\end{itemize}

The Fourier transform acts on connections in a complicated way, typically changing their rank, number of singularities, and pole orders. Twists and Möbius transformations act in a simpler way, with twists shifting the exponential factors, and Möbius transformation changing the positions of the singularities, but in combination with the Fourier transform, successive applications of basic operations can modify the formal data of connections in an increasingly intricate manner.

Our invariants behave in various ways with respect to the different basic operations. The diagram and the short fission tree are preserved by the Fourier transform, while the fission forest is preserved by twists and Möbius transformations; on the other hand the diagram is in general not preserved by twists and Möbius transformations, while the fission forest is not preserved by the Fourier transform. One would thus like to understand better how far these invariants can bring us towards classifying genus zero wild character varieties. For instance, using basic operations to reduce the rank of a connection as much as possible, could we hope to define a `minimal' short fission tree that would characterize isomorphism classes of genus zero wild character varieties?\\

A reason for considering this class of operations on connections is that they allow to completely classify (irreducible) irregular connections on $\mathbb P^1$ in the \emph{rigid} case, that is when the wild character variety is just a point: if $(E,\nabla)$ is an irreducible rigid irregular connection on $\mathbb P^1$, it can always be brought to the trivial rank 1 connection by successive application of basic operations. This was first proved (in the $\ell$-adic setting) in the case of regular singularities (corresponding to usual local systems on the Betti side) by Katz \cite{katz1996rigid}, who gave an explicit simplification algorithm, which at each step strictly reduces the rank of the connection, involving the operation of \emph{middle convolution}, which can be obtained as a composition of basic operations. The algorithm was later extended \cite{deligne2006letter,arinkin2010rigid} to the case of irregular singularities, in the complex setting. This naturally raises the following question, which is another motivation for this work: what can we say in the non-rigid case? As mentioned by Arinkin \cite[§5.2]{arinkin2010rigid}, we can run the algorithm in the non-rigid case as well: the process then stops at some rank $>2$ which cannot be reduced further. Can we describe these stopping points? It is also not clear a priori whether the stopping points really correspond to the minimal rank that can be reached from the initial connection by successive applications of basic operations: indeed there might be other ways do ``go down'' than the exact steps prescribed by the algorithm, and when the algorithm terminates it is a priori not excluded that we have just reached a ``local minimum'' for the rank, not a global one. 
 \\

\subsection{Elementary wild character varieties}
These questions seem quite difficult to answer in the general case, for arbitrary connections on $\mathbb P^1$, due to the fact that the Fourier transform may at the same time reduce the contribution to the rank of one Stokes circle, while increasing the contribution of another. In this work, as a first step, we consider the case where there is only one Stokes circle. We show that, in this case, we can set up things so that there is essentially a unique way to ``go down'', and we will never be stuck in a local minimum.

The contribution to the rank of a connection of a Stokes circle $I$ with multiplicity $n\geq 1$ is given by $nr$, where $r=\Ram(I)$ is the \emph{ramification order} of $I$, so for a connection with a single Stokes circle reducing the rank amounts to reducing the ramification order. 

In the \emph{untwisted} case $r=1$, connections on $\mathbb P^1$ with just one singularity and a single untwisted Stokes circle $I$ with multiplicity 1 do not give rise to very interesting moduli spaces: the wild character variety $\mathcal M_B(I,\mathcal C)$ is nonempty only for the trivial conjugacy class $\mathcal C=\{1\}\subset \GL_1(\mathbb C)=\mathbb C^*$ and is then just a point. On the other hand, if $I$ is \textit{twisted}, that is $r>1$, $\mathcal M_B(\Theta,\mathcal C)$ can be nonempty for a unique conjugacy class $\mathcal C=\{1\}$ or $\mathcal C=\{-1\}$ in $\GL_1(\mathbb C)$, and we obtain a nontrivial wild character variety $\mathcal M_B(I):=\mathcal M_B(I,\mathcal C)$.

\begin{definition}
An \emph{elementary wild character variety} is a genus zero wild character variety of the form $\mathcal M_B(I)$, where $I$ is a twisted Stokes circle.
\end{definition}

Up to symplectic algebraic isomorphism, an elementary wild character variety $\mathcal M_B(I)$ doesn't depend on the position of the singularity nor on the exact values of the coefficients of the Stokes circle $I$: it only depends on its equivalence class of \emph{admissible deformations} \cite{boalch2014geometry,boalch2022twisted}, which is characterized by a \emph{level datum} $\mathbf L=L(I)\subset \mathbb Q_{>0}$, so we will allow ourselves to write $\mathcal M_B(\mathbf L)$. The expected dimension of $\mathcal M_B(\mathbf L)$ is an even integer $B_{\mathbf L}$, and in this case the diagram associated to the wild character variety by the construction of \cite{doucot2021diagrams} consists of a single vertex to which are attached $\frac{B_{\mathbf L}}{2}$ loops.

In turn, the main question that we are considering in this article is the classification of elementary wild character varieties.\\

\subsection{Main results} Let $\mathcal S$ denote the set of all possible Stokes circles, lying above all points of $\mathbb P^1$. The crucial fact that we are relying on is that any basic  operation $A$ induces a well-defined self-bijection of $\mathcal S$, that we will also be denoting by $A$. Therefore, it makes sense to study orbits of a single Stokes circle under successive applications of basic operations.  

Let us denote by $\mathcal O$ the group whose elements are compositions of a finite number of basic operations. The group $\mathcal O$ acts on $\mathcal S$, and for $I\in \mathcal S$, we denote by $\mathcal O\cdot I$ its orbit under $\mathcal O$. 

Let us also denote by $\mathbb L$ the set of all possible level data, and for $\mathbf L\in \mathbb L$, let $\mathcal O\cdot \mathbf L \subset \mathbb Q_{>0}$ be the set of level data of Stokes circles that can be obtained by successive application of basic operations from a Stokes circle with level datum $\mathbf L$, that is

\[
\mathcal O\cdot \mathbf L:=\bigcup_{\substack{I\in \mathcal S\\ L(I)=\mathbf L}} L(\mathcal O\cdot I).
\]

Our first main result provides an explicit description of the orbits $\mathcal O\cdot \mathbf L$. To this end, we construct an explicit map $S: \mathbb L\to \mathbb L$, the simplification map, such that, for any $\mathbf L\in \mathbb L$, we have:
\begin{itemize}
\item Either $\Ram(S(\mathbf L))<\Ram(\mathbf L)$;
\item Or $S(\mathbf L)=\mathbf L$.
\end{itemize}
We set $\mathbb L^{min}:=\left\{\mathbf L\in \mathbb L \;|\; S(\mathbf L)=\mathbf L\right\}$. Next, we define a map $\widehat{S}: \mathbb L\mapsto \mathbb L^{min}$, the full simplification map, by applying repeatedly $S$ until we reach an element of $\mathbb L^{min}$.

\begin{theorem}[Thm. \ref{thm:algorithm_gives_minimal_levels}, Prop. \ref{prop:characterization_minimal_level_data}]
The full simplification map $\widehat{S}:\mathbb L\to \mathbb L^{min}$ has the following properties:
\begin{itemize}
\item Let $\mathbf L\in \mathbb L$ be a level datum. Then $\widehat{S}(\mathbf L)$ is the unique element with minimal ramification order in the orbit $\mathcal O\cdot \mathbf L$. 
\item The image $\mathbb L^{min}$ of $\widehat{S}$ admits the following description: let $\mathbf L\in \mathbb L$, then $\mathbf L\in \mathbb L^{min}$ if and only if $\max(\mathbf L)>2$.
\end{itemize}
\end{theorem}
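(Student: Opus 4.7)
The plan is to decompose the statement into its two constituents: the characterization of the fixed points of $S$, and the global minimality/uniqueness of $\widehat{S}(\mathbf L)$ within the orbit $\mathcal O \cdot \mathbf L$. Since $S$ strictly reduces the positive integer $\Ram$ unless it fixes $\mathbf L$, the iteration defining $\widehat{S}$ terminates in finitely many steps, so $\widehat{S}: \mathbb L \to \mathbb L^{min}$ is well defined; the real content lies in the other two claims.

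For the characterization $\mathbf L \in \mathbb L^{min} \iff \max(\mathbf L) > 2$, I would analyze directly how each basic operation acts on the level datum of a single Stokes circle. M\"obius transformations preserve the level datum, and twists by rank-one connections affect only the integer-slope part, so neither changes $\Ram$. Hence only the Fourier transform can move us between ramification orders, and its induced action on slopes is classical: a slope $s > 1$ is sent to $s/(s-1)$, while slopes $\leq 1$ are removed. Writing $\max(\mathbf L) = p/q$ in lowest terms, if $p > 2q$, the image slope $p/(p-q)$ has denominator $p-q > q$, so a single Fourier step strictly \emph{increases} the denominator of the top slope; combined with the ramification-preserving nature of twists and M\"obius transformations, no composition of basic operations can reduce $\Ram$, which proves $\mathbf L \in \mathbb L^{min}$. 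Conversely, when $\max(\mathbf L) \leq 2$ one exhibits a twist pulling the top slope into the interval $(1,2]$ followed by a Fourier transform that strictly reduces $\Ram$; this is precisely the composition used to define $S$, so $\mathbf L \notin \mathbb L^{min}$.

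For the minimality and uniqueness of $\widehat{S}(\mathbf L)$ in the orbit, the key reduction is: it suffices to show that any two elements of $\mathbb L^{min}$ in the same orbit are equal. Indeed, for $\mathbf L' \in \mathcal O \cdot \mathbf L$, the element $\widehat{S}(\mathbf L')$ lies in $\mathbb L^{min} \cap \mathcal O \cdot \mathbf L$ with $\Ram(\widehat{S}(\mathbf L')) \leq \Ram(\mathbf L')$, so if all such elements coincide then $\widehat{S}(\mathbf L') = \widehat{S}(\mathbf L)$, yielding simultaneously the minimality and the uniqueness claim. To prove this reduction, given a word $w$ of basic operations carrying $\mathbf L_1 \in \mathbb L^{min}$ to $\mathbf L_2 \in \mathbb L^{min}$, I would induct on the length of $w$ (or on the maximum ramification visited along the path). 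Using the characterization above, each Fourier step that strictly increases ramification must eventually be undone, and the only inverse reductions available correspond to steps of $S$; this rigidity should collapse the word to the identity on levels up to admissible deformations.

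The main obstacle is precisely this last step: a priori one cannot exclude non-trivial cycles inside $\mathbb L^{min}$ that would allow multiple distinct minimal elements in a single orbit. The resolution should come from a normal-form analysis of words in basic operations — exploiting the fact that Fourier transform is essentially involutive up to sign and twist, while twists commute with each other and with M\"obius transformations — to show that any such cycle collapses. I expect this to involve tracking both the top level and its multiplicity along a word, inducting on a combined complexity measure built from the ramification order and the word length.
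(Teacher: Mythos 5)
Your overall architecture (termination of the iteration, a separate characterization of the fixed points of $S$, and the reduction of global minimality/uniqueness to ``any two elements of $\mathbb L^{min}$ in one orbit coincide'') matches the paper's, but both substantive parts have genuine gaps. For the characterization of $\mathbb L^{min}$: recall that $\mathbb L^{min}$ is the set of fixed points of the \emph{single-step} map $S$, so what must be analyzed is every elementary step $FTM$, not just the Fourier transform applied to the circle as given. Your forward direction ignores that a twist can first raise the slope to an integer $m_1\geq 3$ above the top level, and that a M\"obius transformation can move the singularity to finite distance (where $F$ sends $\Ram$ from $r$ to $r+s$); the paper's proof of its Prop.~3.6 is exactly the exhaustive case analysis over the position ($\infty$ or finite), the integer leading exponents $m_1>\dots>m_l>k_0$ created by the twist, and the value of $k_0$ relative to $1$ and $2$. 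Your converse construction is wrong in the sub-case $\max(\mathbf L)<1$: a twist is by an \emph{unramified} factor, so it can only add integer exponents, and you cannot ``pull the top slope into $(1,2]$'' --- twisting to slope $2$ makes $F$ preserve $\Ram$ ($2r-r=r$), and twisting to slope $\geq 3$ increases it. The simplifying step there is instead $F$ applied at slope $\leq 1$, using case (2) of the stationary-phase lemma ($\Ram\mapsto r-s$). Moreover, you assert from the single-step computation that ``no composition of basic operations can reduce $\Ram$'' when $\max(\mathbf L)>2$; that is the content of the \emph{hard} theorem, not of the fixed-point characterization, and it does not follow from one step since a word can raise $\Ram$ and later lower it.

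For minimality and uniqueness, you explicitly leave the crucial step as an expectation (``this rigidity should collapse the word''), and this is precisely where the paper's key lemma lives. The missing ingredient is the paper's Prop.~3.6: for a given level datum there is \emph{at most one} equivalence class of elementary steps that strictly reduces $\Ram$, and at most one that preserves $\Ram$ (and the latter also preserves the level datum). Granting this, the paper rewrites an arbitrary word $B$ as a composition of elementary steps (twists and M\"obius maps commute and absorb into one another), looks at the sequence of ramification orders $r_0,\dots,r_m$, locates a peak $r_i<r_{i+1}=\dots=r_{j-1}>r_j$, notes that the plateau preserves level data, and then uses uniqueness of the simplifying step to conclude $L(I_j)=L(I_i)$ --- so the peak can be excised and the word shortened, and induction on $m$ finishes. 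Your proposed ``normal-form analysis of words exploiting that Fourier is essentially involutive'' is pointing in the right direction, but without the uniqueness-of-the-simplifying-step statement there is nothing forcing the descent at a peak to undo the ascent, and the possibility of nontrivial cycles (which you correctly identify as the main obstacle) is not excluded. As written, the proposal does not constitute a proof of either bullet of the theorem.
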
 

However, this result does not quite imply a classification of elementary wild character varieties up to the action of basic operations, due to the following important subtlety: if $I$ is a Stokes circle of an irreducible irregular connection $(E,\nabla)$ on $\mathbb P^1$, and $A$ is a basic operation, then in general it is not true that $A\cdot I$ is a Stokes circle of $A\cdot (E,\nabla)$ (in fact this is always true if both $I$ and $A\cdot I$ are not a tame circle $\cir{0}$, but this will depend on the formal monodromy when tame circles are involved). In particular, for irregular connections on $\mathbb P^1$, the property of having a single Stokes circle is not preserved by basic operations, so that, if we start with a connection $(E,\nabla)$ with a single Stokes circle $I$ with level datum $\mathbf L$ and perform the basic operations underlying the simplification map $S$, this will in general create new extra Stokes circles along the way.  

Fortunately we can ensure that this never happens if we restrict to Stokes circles located at infinity, and basic operations which preserve this location at infinity. This still leaves us with a lot of allowed operations due to the fact that the set of Stokes circles at infinity and of slope $>1$ is globally preserved by the Fourier transform. 

We can adapt the previous constructions to this slightly modified setup. Let $\mathcal S_\infty$ denote the set of all Stokes circles at infinity. If $I\in \mathcal S_\infty$ we define
\[
\mathcal O_\infty \cdot I:= \left\{ A_m\dots A_1\cdot I \;\middle \vert\; \begin{array}{c}

 m\geq 1,  A_i \text{ basic operation, } i=1,\dots m,\\ A_k\dots A_1\cdot I\in \mathcal S_\infty \text{ for all } k=1,\dots, m
 \end{array} \right\}.
\]
For $\mathbf L\in \mathbb L$, we define
\[
\mathcal O_\infty\cdot \mathbf L:=\bigcup_{\substack{I\in \mathcal S_\infty\\ L(I)=\mathbf L}} L(\mathcal O_\infty\cdot I).
\]
Any element $\mathbf L'$ in an orbit $\mathcal O_\infty \cdot \mathbf L$ satisfies $B_{\mathbf L'}=B_{\mathbf L}$ (this is not the case for orbits $\mathcal O\cdot \mathbf L$).
In a completely similar way as before, we define a local simplification map $S_\infty:\mathbb L\to \mathbb L$, which strictly reduces the ramification order away from a subset $\mathbb L_\infty^{min}\subset \mathbb L$ of fixed elements, and a local full simplification map $\widehat{S}_\infty:\mathbb L\to \mathbb L_\infty^{min}$.

\begin{theorem}[Thm. \ref{thm:algorithm_at_infinity}] The local full simplification map $\widehat{S}_\infty:\mathbb L\to \mathbb L_\infty^{min}$ has the following properties:
\begin{itemize}
\item Let $\mathbf L\in \mathbb L$ be a level datum. Then $\widehat{S}_\infty(\mathbf L)$ is the unique element with minimal ramification order in the set $\mathcal O_\infty\cdot \mathbf L$. 
\item The image $\mathbb L^{min}_\infty$ of $\widehat{S}_\infty$ admits the following description: let $\mathbf L\in \mathbb L$, then $\mathbf L\in \mathbb L^{min}_\infty$ if and only if $\max(\mathbf L)>2$ or  $\max(\mathbf L)<1$.
\end{itemize}
\end{theorem}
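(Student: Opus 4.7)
My approach parallels the proof of Theorem~\ref{thm:algorithm_gives_minimal_levels}, carefully restricting attention to the basic operations that preserve the singularity at infinity. Recall that the Fourier transform sends a Stokes circle at infinity of slope $s$ to one at infinity precisely when $s \geq 1$ (with new slope $s/(s-1)$ for $s > 1$), whereas for $s < 1$ its image is a Stokes circle at a finite point and therefore leaves $\mathcal{S}_\infty$. Twists by rank one connections with pole only at $\infty$, and affine Möbius transformations, preserve $\mathcal{S}_\infty$ but do not by themselves reduce the ramification. Consequently any ramification-reducing step inside $\mathcal{O}_\infty$ must ultimately involve a Fourier transform, which forces the current level datum to satisfy $\max(\mathbf{L}) \geq 1$.

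The definition of $S_\infty$ then mirrors that of $S$ exactly when $1 \leq \max(\mathbf{L}) \leq 2$: the same composition of a twist and a Fourier transform used to define $S$ can be performed, all intermediate Stokes circles stay in $\mathcal{S}_\infty$ in this regime, and the calculation showing strict reduction of ramification transfers verbatim from the proof of Theorem~\ref{thm:algorithm_gives_minimal_levels}. For level data with $\max(\mathbf{L}) > 2$ or $\max(\mathbf{L}) < 1$ we set $S_\infty(\mathbf{L}) = \mathbf{L}$ and iterate to obtain $\widehat{S}_\infty$. For the characterization of $\mathbb{L}_\infty^{min}$ I would then distinguish two cases. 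If $\max(\mathbf{L}) < 1$, the Fourier transform leaves $\mathcal{S}_\infty$ and the remaining operations do not change the ramification, so $\mathbf{L}$ is genuinely fixed. If $\max(\mathbf{L}) > 2$ the Fourier move is available but a direct denominator computation shows it strictly increases the ramification (writing $s = p/q$ in lowest terms with $p > 2q$, the new slope $s/(s-1) = p/(p-q)$ is already in lowest terms and $p-q > q$); since no follow-up operation at infinity can recover, $\mathbf{L}$ is again fixed.

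The main obstacle will be the uniqueness statement — that $\widehat{S}_\infty(\mathbf{L})$ is the unique element of minimal ramification in $\mathcal{O}_\infty \cdot \mathbf{L}$. My strategy is a confluence argument: show that for any two level data $\mathbf{L}_1, \mathbf{L}_2 \in \mathcal{O}_\infty \cdot \mathbf{L}$ differing by a single basic operation preserving $\mathcal{S}_\infty$, one has $\widehat{S}_\infty(\mathbf{L}_1) = \widehat{S}_\infty(\mathbf{L}_2)$, whence by induction $\widehat{S}_\infty$ is constant on the whole $\mathcal{O}_\infty$-orbit. Verifying confluence reduces to a case analysis of how each basic operation at infinity interacts with the explicit formula for $S_\infty$, working modulo the trivial action of twists and affine Möbius transformations. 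The delicate feature, compared with the unrestricted setting, is the enlarged fixed-point set $\mathbb{L}_\infty^{min}$: one must argue that level data with $\max(\mathbf{L}) < 1$ and those with $\max(\mathbf{L}) > 2$ lie in distinct $\mathcal{O}_\infty$-orbits, which follows from the observation that any Fourier move joining the two regimes would first have to leave $\mathcal{S}_\infty$ and is therefore excluded from $\mathcal{O}_\infty$.
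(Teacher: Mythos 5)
Your overall architecture is the same as the paper's: the paper proves this theorem by declaring it ``completely analogous'' to Theorem \ref{thm:algorithm_gives_minimal_levels}, whose proof decomposes an arbitrary word in the allowed operations into elementary steps and runs an induction on the length using the uniqueness of the ramification-reducing and ramification-preserving elementary steps; your ``confluence'' argument is a rewriting-theoretic repackaging of exactly that induction. But it only closes once the local analogue of Proposition \ref{prop:unique_simplifying step}, namely Proposition \ref{prop:unique_simplifying step_infinity}, is in hand: one needs that there is at most one equivalence class of local elementary steps decreasing the ramification order and at most one preserving it, the latter also preserving the level datum. You neither state nor prove this, and it is the entire content of the case analysis you defer to ``a case analysis of how each basic operation at infinity interacts with the explicit formula for $S_\infty$''.

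More seriously, two of your supporting claims are wrong because you conflate the slope of a Stokes circle with the largest element of its level datum. The Fourier transform keeps a circle at infinity precisely when its \emph{slope} is $>1$ (not $\geq 1$, and not when $\max(\mathbf L)\geq 1$): a circle whose levels are all $<1$ can always be twisted at infinity to have integer slope $m\geq 2$ and then Fourier-transformed without leaving $\mathcal S_\infty$. The correct reason such $\mathbf L$ are fixed by $S_\infty$ is not that ``the Fourier transform leaves $\mathcal S_\infty$'', but that every such admissible move multiplies the ramification order by $m-1\geq 1$ and therefore never decreases it. Likewise, the assertion that level data with $\max(\mathbf L)<1$ and with $\max(\mathbf L)>2$ lie in distinct $\mathcal O_\infty$-orbits ``because any Fourier move joining the two regimes would have to leave $\mathcal S_\infty$'' is false: for instance $\left\{\frac{3}{2},\frac{1}{3}\right\}\mapsto\left\{\frac{2}{3}\right\}$ is a single local elementary step at infinity crossing from the regime $\max(\mathbf L)>1$ into $\max(\mathbf L)<1$. (That two distinct fixed points cannot share an orbit is a \emph{consequence} of the uniqueness statement you are proving, not an independent observation available beforehand.) Finally, your computation in the regime $\max(\mathbf L)>2$ treats only a single level equal to the slope; you must also allow integer-exponent prefixes, where the slope is an integer $m\geq 3$ and the ramification is multiplied by $m-1\geq 2$ --- the conclusion survives, but the argument as written does not cover this case.
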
 

Interestingly, these results can be viewed in an elegant way, as allowing us to endow the set $\mathbb L$ with the structure of a forest, as follows: let $\Gamma^{\mathbb L}$ be the infinite graph with set of vertices $\mathbb L$, and such that $\mathbf L, \mathbf L'\in \mathbb L$, with $\mathbf L\neq \mathbf L'$, are linked by an edge in $\Gamma^{\mathbb L}$ if and only if $\mathbf L=S(\mathbf L')$, or $\mathbf L'=S(\mathbf L)$. 

Similarly, let $\Gamma^{\mathbb L}_\infty$ be the infinite graph with set of vertices $\mathbb L$, and such that $\mathbf L, \mathbf L'\in \mathbb L$, with $\mathbf L\neq \mathbf L'$, are linked by an edge in $\Gamma^{\mathbb L}_\infty$ if and only if $\mathbf L=S_\infty(\mathbf L')$, or $\mathbf L'=S_\infty(\mathbf L)$.

\begin{theorem} The graph $\Gamma^{\mathbb L}$ (respectively $\Gamma^{\mathbb L}_\infty$) has the following properties:
\begin{itemize}
\item It is acyclic, i.e. is a forest.
\item Its connected components, i.e. its trees, are the orbits $\mathcal O\cdot \mathbf L$ (respectively $\mathcal O_\infty\cdot \mathbf L$). In particular each connected component contains exactly one element of $\mathbb L^{min}$ (respectively $\mathbb L^{min}_\infty$).
\end{itemize}
\end{theorem}

Viewing each element of $\mathbb L^{min}$ (respectively $\mathbb L^{min}_\infty$) as the root of the corresponding tree, we obtain a partition of the set $\mathbb L$ of level data as an infinite union of disjoint infinite rooted trees, and the simplification map $S$ (respectively $S_\infty$) amounts to taking one step towards the root. Notice that each vertex has an infinite number of children, since there is an infinite number of ways of increasing the rank of a Stokes circle by applying a twist followed by taking the Fourier transform. A small part of the tree in $\mathbb L^{min}_\infty$ that is the connected component of the standard Lax representation $\left\{\frac{5}{2}\right\}$ of the Painlevé I moduli space is represented on Fig. \ref{fig:part_tree_Painleve_I}.\\

\begin{center}
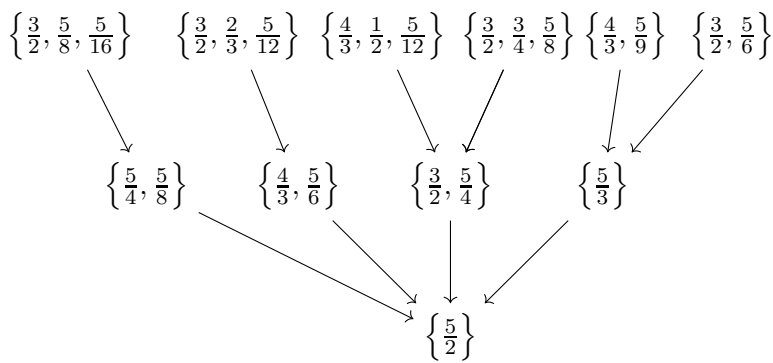
\begin{figure}[H]
\begin{tikzpicture}
\tikzstyle{vertex}=[circle,fill=black,minimum size=4pt,inner sep=0pt]
\node (A) at (0,0) {$\left\{\frac{5}{2}\right\}$};

\node (B1) at (2,2) {$\left\{\frac{5}{3}\right\}$};
\node (B2) at (0,2) {$\left\{\frac{3}{2},\frac{5}{4}\right\}$};
\node (B3) at (-2,2) {$\left\{\frac{4}{3},\frac{5}{6}\right\}$};
\node (B4) at (-4,2) {$\left\{\frac{5}{4},\frac{5}{8}\right\}$};
\node (C1) at (3.7,4) {$\left\{\frac{3}{2},\frac{5}{6}\right\}$};
\node (C2) at (2.3,4) {$\left\{\frac{4}{3},\frac{5}{9}\right\}$};
\node (C3) at (0.9,4) {$\left\{\frac{3}{2},\frac{3}{4},\frac{5}{8}\right\}$};
\node (C4) at (-0.9,4) {$\left\{\frac{4}{3},\frac{1}{2},\frac{5}{12}\right\}$};
\node (C5) at (-2.8,4) {$\left\{\frac{3}{2},\frac{2}{3},\frac{5}{12}\right\}$};
\node (C6) at (-5,4) {$\left\{\frac{3}{2},\frac{5}{8},\frac{5}{16}\right\}$};
\draw[->] (B1)--(A);
\draw[->] (B2)--(A);
\draw[->] (B3)--(A);
\draw[->] (B4)--(A);
\draw[->] (C1)--(B1);
\draw[->] (C2)--(B1);
\draw[->] (C3)--(B2);
\draw[->] (C3)--(B2);
\draw[->] (C4)--(B2);
\draw[->] (C5)--(B3);
\draw[->] (C6)--(B4);
\end{tikzpicture}
\caption{A small part of the tree corresponding to the connected component in $\Gamma_\infty^{\mathbb L}$ of the standard Lax representation $\mathbf L=\left\{\frac{5}{2}\right\}\in \mathbb L_\infty^{min}$ of the Painlevé I moduli space, which constitutes the root of the tree.  The arrows indicate the local simplification map $S_\infty$. Every vertex of the tree actually has an infinite number of children. Every vertex is expected to be an elementary Lax representation of the Painlevé I moduli space, and Thm. \ref{thm:intro_finite_number_minimal_level_data} implies that there can be no other ones (but there will also exist non-elementary Lax representations, with several Stokes circles).}
\label{fig:part_tree_Painleve_I}
\end{figure}
\end{center}

Our second main result is that there is a finite number of locally minimal level data for any possible dimension of the elementary wild character variety.

\begin{theorem}[Thm. \ref{thm:finite_number_minimal_factors}, Prop. \ref{prop:unique_non_simplifiable_small_dimension}]
\label{thm:intro_finite_number_minimal_level_data} Let $n\geq 0$ be an even integer.
\begin{itemize}
\item There is a finite number of locally minimal level data $\mathbf L\in \mathbb L^{min}_\infty$ such that $B_{\mathbf L}=n$. 
\item If $n=0, 2, 4$, there is a  unique locally minimal level datum $\mathbf L\in \mathbb L^{min}_\infty$ such that $B_{\mathbf L}=n$.
\end{itemize}
\end{theorem}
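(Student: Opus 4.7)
The approach combines the characterization of $\mathbb L_\infty^{min}$ given in the preceding theorem, namely $\mathbb L_\infty^{min}=\{\mathbf L\in\mathbb L\,:\,\max(\mathbf L)>2\text{ or }\max(\mathbf L)<1\}$, with an explicit growth estimate on $B_\mathbf L$ as a function of the level datum. Writing $\mathbf L=\{l_1>l_2>\cdots>l_k\}$ with ramification $r=\Ram(\mathbf L)$ and each $l_i=p_i/r_i$ in lowest terms, the plan is to use the formula for $B_\mathbf L$ coming from the dimension of the elementary wild character variety (equivalently, twice the number of loops in the diagram of \cite{doucot2021diagrams}), which can be expressed as a sum of positive contributions indexed by the non-trivial residue classes modulo $r$, each weighted by the largest level $l$ whose denominator $r_l$ does not divide the class. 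This expression grows polynomially in both $r$ and the $l_i$, and one then bounds $\mathbf L$ in each of the two cases of the characterization.

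In the case $\max(\mathbf L)=l_1>2$, the top-level contribution alone forces $B_\mathbf L$ to exceed a positive multiple of $r(r-1)(l_1-1)$. The constraint $B_\mathbf L=n$ thus bounds $r$ and $l_1$ separately, and then since every other $l_i\le l_1$ has denominator dividing $r$ and the number of levels $k$ is itself controlled by $n$, only finitely many level data remain. In the case $\max(\mathbf L)<1$ the top slope is too small to drive the bound on its own, but ramification $r\ge 2$ still produces a positive contribution from each of the $r-1$ non-trivial residue classes (since $L(m)\ge 1/r$ on every such class). Combined with the negative quadratic term in $r$ present in $B_\mathbf L$, this yields a bound of the form $r\le C(n)$; the levels, which must lie in $\{1/r,2/r,\ldots,(r-1)/r\}$, then admit only finitely many choices.

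For the uniqueness assertion in dimensions $n=0,2,4$, I would carry out the enumeration produced by the finite bounds above and isolate the unique element of $\mathbb L_\infty^{min}$ in each case. The natural candidates are $\mathbf L=\{1/2\}$ for $n=0$ (the simplest ramified Stokes circle, whose wild character variety is a point), $\mathbf L=\{5/2\}$ for $n=2$ (the Painlevé~I root shown in Fig.~\ref{fig:part_tree_Painleve_I}), and $\mathbf L=\{7/2\}$ for $n=4$. In each case one checks that no competing candidate with $\max(\mathbf L)<1$ or larger $\max(\mathbf L)>2$ yields the same value of $B_\mathbf L$.

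The main obstacle I anticipate is the detailed handling of the case $\max(\mathbf L)<1$: here the bound cannot come from a visible dominant invariant such as the top slope, but must exploit a delicate interplay between the ramification and the levels through the Galois structure of the pairwise differences of exponential factors. A secondary subtlety arises in the low-dimensional enumeration, where the raw formula for $B_\mathbf L$ may be degenerate on the smallest level data and must be cross-checked against the geometric definition of $B_\mathbf L$ as the expected dimension of $\mathcal M_B(\mathbf L)$; once this is done, the uniqueness in each of the cases $n=0,2,4$ reduces to a short finite verification.
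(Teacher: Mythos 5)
There is a genuine gap, concentrated exactly where you flag your ``main obstacle''. The case $\max(\mathbf L)<1$ does not require any delicate finiteness argument: it is empty. The paper proves as a preliminary lemma that if the largest level $k=s_0/r$ satisfies $k<1$, then all positive contributions to $B_{\mathbf L}$ are bounded by $(r-1)s_0<(r-1)r$, so $B_{\mathbf L}\leq sr-r^2+1\leq -r+1<0$; hence no level datum with $\max(\mathbf L)<1$ can have $B_{\mathbf L}=n\geq 0$. Your plan to extract ``a positive contribution from each of the $r-1$ non-trivial residue classes'' and balance it against the $-r^2$ term is heading in the wrong direction — you would be enumerating an empty set — and as sketched it does not produce the bound $r\leq C(n)$ you assert. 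Relatedly, your candidate $\mathbf L=\{1/2\}$ for $n=0$ is wrong: the formula gives $B_{\{1/2\}}=(2-1)\cdot 1-4+1=-2$, and the unique minimal level datum with $B_{\mathbf L}=0$ is $\mathbf L=\emptyset$. (Your candidates $\{5/2\}$ and $\{7/2\}$ for $n=2,4$ are correct.)

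The remaining case $\max(\mathbf L)>2$ is also not as immediate as you claim. Your asserted lower bound ``a positive multiple of $r(r-1)(l_1-1)$'' ignores that the top contribution is $(r-(s_0,r))s_0$, and $(s_0,r)$ can be as large as $r/2$; a correct bound must track the gcd structure. The paper handles this by first applying the Fourier transform to move to a level datum $\mathbf L'$ with the same $B$ and largest level $k'=s/r$ satisfying $1<k'<2$, then setting $d=(s,r)$, $s=r+ld$ with $2\leq l\leq \rho-1$ (where $r=\rho d$), and showing $2n\geq ((l-1)\rho-l)d^2+1$; the subcases $l>2$ and $l=2$ (where one must separately rule out $\rho\leq 2$) then bound $r$, and Lemma~\ref{lemma:bound_ramification} packages this as $r<6n$. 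A direct argument without the Fourier transform can probably be made to work, but it requires the same kind of gcd bookkeeping, which your proposal does not supply. Finally, your claim that the number of levels is ``controlled by $n$'' is true but needs the observation that the chain of gcds $(s_0,r)\supsetneq(s_0,s_1,r)\supsetneq\cdots$ is strictly decreasing, so the number of levels is at most $\log_2 r$ once $r$ is bounded.
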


In terms of the graph $\Gamma_\infty^{\mathbb L}$, this means that the set of level data $\mathbf L$ with $B_{\mathbf L}=n$ is the union of a finite number of (infinite) trees. The list of all locally minimal level data for $B_\mathbf L$ up to 20 is given at the end of the article in Table \ref{fig:table_non-simplifiable_levels}.

This finiteness result conjecturally has consequences for the classification of elementary wild character varieties. Indeed, Möbius transformations and twists trivially induce isomorphisms of wild character varieties. For the Fourier transform, it is known that is induces a bijection between the sets of isomorphisms classes of connections on both sides \cite{malgrange1991equations, mochizuki2018stokes}, and one expects that it actually induces symplectic algebraic isomorphisms between the corresponding wild character varieties, obtained from the action of the Fourier transform of the Stokes data of the connections. However, so far explicit enough expressions for the Stokes data of the Fourier transform have only been obtained in some particular cases, including the case where the initial connection only has regular singularities \cite{balser1981reduction,malgrange1991equations,szabo2005nahm,hien2014local,dagnolo2020topological}, the case of pure gaussian type \cite{sabbah2016differential, hohl2022d_modules} or recently in joint work of the author with A. Hohl \cite{doucot2024topological} for a class of twisted wild character varieties with only one singularity at infinity, including elementary ones with a single level. In all these cases, this yields algebraic isomorphisms between the character varieties on both sides of the Fourier transform, which in all known fully explicit examples are compatible with their symplectic structures. Assuming that this holds true in the general case, our results imply:

\begin{conjecture}
Let $n$ be an even integer. There is a finite number of isomorphism classes of elementary wild character varieties of dimension $n$.
\end{conjecture}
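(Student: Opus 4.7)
The plan is to deduce the conjecture directly from Theorem \ref{thm:intro_finite_number_minimal_level_data} by combining it with the assertion, foreshadowed by the author, that every basic operation lying in $\mathcal{O}_\infty$ induces a symplectic algebraic isomorphism between the corresponding elementary wild character varieties. Since $\mathcal{M}_B(I)$ only depends on the admissible deformation class of $I$, hence on $L(I) \in \mathbb{L}$, it is enough to work at the level of orbits $\mathcal{O}_\infty \cdot \mathbf{L}$ rather than on $\mathcal{S}_\infty$ itself.

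First I would handle the easy half. Möbius transformations merely relabel the position of the singularity and twisting by a rank one connection shifts the exponential factor by a global amount; in both cases the resulting identification at the Betti side is explicit and evidently symplectic, so these operations yield canonical symplectic algebraic isomorphisms $\mathcal{M}_B(\mathbf{L}) \xrightarrow{\sim} \mathcal{M}_B(\mathbf{L}')$. The delicate half is the Fourier transform: one must upgrade the Malgrange--Mochizuki bijection on isomorphism classes of connections to a symplectic algebraic isomorphism $\mathcal{M}_B(I) \xrightarrow{\sim} \mathcal{M}_B(F \cdot I)$ whenever both $I$ and $F \cdot I$ lie in $\mathcal{S}_\infty$ and are twisted, which is precisely the condition enforced by restricting to $\mathcal{O}_\infty$ (so that no spurious extra Stokes circles appear along the way, and the elementary nature is preserved).

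Granting this, the argument concludes quickly: every orbit $\mathcal{O}_\infty \cdot \mathbf{L}$ consists of level data whose elementary wild character varieties are pairwise symplectically isomorphic, and by Theorem \ref{thm:algorithm_at_infinity} each such orbit contains a unique representative in $\mathbb{L}^{min}_\infty$. Since $B_{\mathbf{L}}$ is invariant on $\mathcal{O}_\infty$-orbits, Theorem \ref{thm:intro_finite_number_minimal_level_data} then bounds the number of orbits meeting the fibre $\{B_{\mathbf{L}} = n\}$ by a finite number, which in turn bounds the number of isomorphism classes of elementary wild character varieties of dimension $n$.

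The hard part is evidently the symplectic compatibility of the Fourier transform in full generality. The strategy I would pursue is a reduction argument: using the local simplification map $S_\infty$ and its inverse branches, decompose an arbitrary move in $\mathcal{O}_\infty$ into a finite sequence of elementary Fourier--twist steps, and seek to match Stokes data step by step against the explicit formulas already available in the regular singular case \cite{malgrange1991equations, dagnolo2020topological}, the pure Gaussian case \cite{sabbah2016differential, hohl2022d_modules}, and the single-level twisted case from \cite{doucot2024topological}. The main obstruction lies in controlling the symplectic form across a general Fourier step where both the input and output Stokes circles are twisted of high slope: here one would need either a global topological description of the Fourier transform of Stokes data, or a rigidity argument showing that any algebraic isomorphism produced by the Malgrange--Mochizuki bijection is automatically symplectic up to natural rescalings.
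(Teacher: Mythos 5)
The statement you are asked about is labelled a \emph{conjecture} in the paper, and your proposal makes clear why: you have correctly reconstructed the paper's own conditional derivation (finiteness of $\mathbb L^{min}_\infty$ in each dimension, constancy of $B_{\mathbf L}$ on $\mathcal O_\infty$-orbits, uniqueness of the minimal representative in each orbit, and the trivial symplectic invariance under twists and M\"obius transformations), but the essential remaining input --- that the Fourier transform induces a symplectic algebraic isomorphism $\mathcal M_B(I)\simeq\mathcal M_B(F\cdot I)$ for arbitrary twisted Stokes circles at infinity of slope $>1$ --- is exactly the point that is not known. The paper only cites this as an expectation, verified in special cases (regular singularities, pure Gaussian type, and the single-level twisted case of the author's joint work with Hohl), and your final paragraph offers a research strategy for it rather than a proof. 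So what you have written is a correct reduction of the conjecture to that open compatibility statement, matching the paper's own reasoning, but it is not a proof of the conjecture; the gap you flag yourself is genuine and is precisely why the paper states the result as a conjecture rather than a theorem.

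One smaller point worth making explicit if you write this up: the conjecture concerns all elementary wild character varieties of dimension $n$, i.e.\ all nonempty $\mathcal M_B(\mathbf L)$ with $\dim\mathcal M_B(\mathbf L)=B_{\mathbf L}=n$, and the reduction to orbits at infinity uses that any twisted Stokes circle can first be moved to infinity by a M\"obius transformation without changing its level datum, and that $\dim\mathcal M_B(\mathbf L)$ really equals the expected dimension $B_{\mathbf L}$. Both facts are asserted in the paper, but your argument silently uses them and they should be recorded as hypotheses alongside the symplectic Fourier compatibility.
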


For $n=2$, the unique locally minimal level datum $\mathbf L=\left\{\frac{5}{2}\right\}$ corresponds to the moduli space associated with the Painlevé I equation. If we adopt as in \cite{boalch2018wild} the terminology of `H3 surfaces' to refer to two-dimensional (wild) nonabelian Hodge spaces, we could say that this suggests that the Painlevé I moduli space is the unique elementary H3 surface. All its elementary Lax representations correspond to vertices of the tree with root $\mathbf L$ in the graph $\Gamma_\infty^{\mathbb L}$. Some of them are drawn on Fig. \ref{fig:part_tree_Painleve_I}.\\

Let us make some comments about related works. Note that quite similar ideas are already present in the work of Hiroe \cite{hiroe2017ramified}, although here we go further since we are also considering the non-rigid case. Notice that the situation is quite similar to what happens in the resolution of singularities of plane algebraic curves. However, while in the latter case it is always possible to obtain a resolution of singularities by application of successive blow-ups, in our context it is in general not always possible to reach a tame Stokes circle by repeated application of basic operations.

Related to our finiteness results, let us mention that recently, in a similar spirit, using the Katz-Deligne-Arinkin algorithm, Jakob counted rigid irregular connections with prescribed monodromy groups \cite{jakob2020classification,jakob2023wildly}, Sabbah obtained some finiteness results for rigid irregular connections on $\mathbb P^1$ \cite{sabbah2023remarks}, and Boalch counted untwisted fission trees \cite{boalch2024counting}.

\subsection{Contents} The article is organized as follows. In section \ref{section:reminders}, we review the necessary facts about Stokes circles, their level data, and how they transform under basic operations. In section \ref{section:simplification_general_case}, we define the simplification algorithm for a single Stokes circle in the general case and establish its properties. Finally in section \ref{section:simplification_infinity} we adapt the algorithm to the situation where Stokes circles remain at infinity, prove the finiteness of the number of locally non-simplifiable Stokes circles, and compute all locally minimal level data for small dimensions.

\subsection*{Acknowledgements} I am funded by the PNRR Grant CF 44/14.11.2022, ``Cohomological Hall Algebras of Smooth Surfaces and Applications'', led by Olivier Schiffmann. I also thank the funding of FCiências.ID, the great working environment of the Grupo de Física Matemática, where most of this work was carried out, and Philip Boalch, Andreas Hohl, Gabriele Rembado, Giordano Cotti, Giulio Ruzza, Davide Masoero, Gabriele Degano, Julian Barragán Amado, for useful discussions.

\section{Slopes and level data of Stokes circles, and basic operations}
\label{section:reminders}

In this section, we briefly recall a few facts about exponential factors/Stokes circles of irregular connections on $\Sigma=\mathbb P^1$, their slopes and level data, as how those are transformed under basic operations. We refer the reader to \cite{boalch2021topology,doucot2021diagrams, boalch2022twisted, doucot2024basic} for a more detailed treatment and proofs of the statements.

Let $a\in \Sigma$. Let $\varphi_a:\widehat \Sigma_a\to \Sigma$ be the real oriented blow-up of $\Sigma$ at $a$. The preimage $\varphi_a^{-1}(a)=:\partial_a$ is a circle parametrizing directions around $a$. Let $z_a$ be a local coordinate on $\Sigma$ around $a$ vanishing at $a$. The (local) exponential local system at $a$ is the local system of sets (i.e) the covering space $\mathcal I_a\to \partial_a$ whose sections are germs of functions on sectors which of the form
\[
q=\sum_{k=1}^s a_k z_a^{-k/r},
\]
where $r\geq 1$ is an integer, and $a_i\in \mathbb C$ for $i=1, \dots, s$ with $a_s\neq 0$. 

The smallest possible integer $r$ such that $q$ can be written in this way is the ramification order $\Ram(q)$ of $q$. Then the degree $s$ of $q$ as a polynomial in $z^{-1/r}$ is the irregularity $\Irr(q)$ of $q$. The quotient $s/r\in \mathbb Q_{\geq 0}$ is the slope $\Slope(q)$ of $q$. 

Given such a section $q$ of $\mathcal I_a$ on a sector, its connected component in $\mathcal I_a$ is a circle that we denote $\cir{q}$. The subcover $\cir{q}\to\partial_a$ has degree $r$. The set $\pi_0(\mathcal I_a)$ of \emph{Stokes circles}, or \emph{exponential factors} at $a$ is in bijection with the set of all Galois orbits of Puiseux polynomials of the form $\sum_{k=1}^s a_k z_a^{-k/r}$. We call the Stokes circle $\cir{0}$ at $a$ the \emph{tame circle} at $a$, and refer to other ones as wild Stokes circles.

The global exponential local system is the disjoint union of the local ones
\[
\mathcal I:=\sqcup _{a\in \mathbb P^1} \mathcal I_a.
\]
If $\cir{q}\in \pi_0(\mathcal I)$, we denote by $\pi(\cir{q})\in \mathbb P^1$ the point over which it lies. \\

The formal classification of meromorphic connections implies that any algebraic connection  $(E,\nabla)$ on a Zariski open susbet $\mathbb P^1\smallsetminus \{a_1,\dots, a_m\}$ of $\mathbb P^1$ defines in a canonical way a (global) \emph{irregular class} 
\[
\bm{\Theta}=n_1\cir{q_1}+\dots n_p \cir{q_p},
\]
with $\cir{q_i}\in \pi_0(\mathcal I)$, and $n_i\geq 1$,  for $i=1,\dots, p$, as well as a collection $\bm{\mathcal C}=(\mathcal C_i)_{i=1,\dots p}$  of conjugacy classes $\mathcal C_i\subset \GL_{n_i}(\mathbb C)$. We say that the Stokes circles $\cir{q_1}, \dots \cir{q_p}$ are the Stokes circles of $(E,\nabla)$, the $n_i$ are their multiplicities, and the $\mathcal C_i$ are the formal monodromies. The pair $(\bm\Theta, \bm{\mathcal C})$ constitutes the global formal data of $(E,\nabla)$. 

A connection $(E,\nabla)$ also defines \emph{modified formal data} $(\bm{\breve\Theta}, \bm{\breve{\mathcal C}})$, obtained from $(\bm\Theta, \bm{\mathcal C})$ by replacing, for every tame circle $\cir{0}_a$ at finite distance (i.e. with $a\neq\infty$) present in $\bm\Theta$ with formal monodromy $\mathcal C$, its multiplicity $n$ by $m:=\rank(A-\Id_{n})$ and replacing $\mathcal C$ by the conjugacy class $\breve{\mathcal C}\subset \GL_m(\mathbb C)$ of $A_{|{\Im(A-\Id_n)}}$, for any $A\in \GL_n(\mathbb C)$ \cite{doucot2021diagrams}.

\subsection{Level datum of a Stokes circle}

We now recall the notion of \emph{level datum} of a Stokes circle (see \cite{boalch2021topology, boalch2022twisted}).

\begin{definition}
Let $I=\cir{q}$, with $\sum_{k=1}^{s} a_i z_a^{-i/r}$ be a Stokes circle at $a$, with $r=\Ram(q)$. For $j=0, \dots, r-1$ we set $q_j:=\sum_{k=1}^s a_k e^{2k\sqrt{-1}\pi/r} z_a^{-k/r}$ the $j$-th Galois conjugate of $q$. The level datum $L(I)$ of $I$ is the set of all nonzero slopes of the differences $q_0-q_j$, for $j=0, \dots, r-1$. 
\end{definition}

In particular, a Stokes circle $I$ is untwisted if and only if its level datum $L(I)$ is empty. The notion of level datum is important because it is related to \emph{admissible deformations}: two Stokes circles are admissible deformations of each other if and only if they have the same level datum. 

There is an explicit description of the level datum in terms of the exponents appearing in the expression of $q$ as a Puiseux polynomial (this is essentially the same as the Puiseux characteristic in the theory of singularities of algebraic curves). 

\begin{proposition}[see \cite{boalch2022twisted}]
Let $I=\cir{q}$ be a Stokes circle at some $a\in \mathbb P^1$. Let $\{k_0>\dots > k_p\} \subset \mathbb Q_{>0}$ be the set of exponents of $q$, i.e. such that $q$ can be written as $q=\sum_{i=0}^p a_i z_a^{-k_i}$, with $a_i\neq 0$. Then $L(I)$ is the (possibly empty) subset $\{k_{i_0}> \dots > k_{i_l}\}$ of $\{k_0>\dots > k_p\}$ determined as follows:
\begin{itemize}
\item $i_0$ is the smallest integer $j\in \{0, \dots, p\}$ such that $k_j$ is not an integer.
\item $i_{j+1}$ is the smallest integer $j$ strictly greater than $i_j$ such that the smallest common denominator of the exponents $(k_{i_0},\dots,  k_{i_j}, k_{i_{j+1}})$ is strictly greater than the smallest common denominator of $(k_0,\dots,  k_j)$.
\end{itemize}

Conversely, for any list of positive rational numbers $\{k_0>\dots > k_p\}$ such that the the sequence of the smallest common denominators of the lists $(k_0), (k_0, k_1),\dots, (k_0,\dots, k_p)$ is strictly increasing, there exists a Stokes circle whose level datum is $\{k_0>\dots > k_p\}$.

\end{proposition}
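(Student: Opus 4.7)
The plan is to reduce the claim to a direct computation of the Galois conjugates and then read off the slopes of their pairwise differences. Writing $q = \sum_{i=0}^{p} a_i z_a^{-k_i}$ with $k_i = n_i/r$ and $r = \Ram(q)$, and setting $\zeta := e^{2\pi\sqrt{-1}/r}$, the $j$-th Galois conjugate of $q$ is $q_j = \sum_{i=0}^{p} a_i \zeta^{n_i j} z_a^{-k_i}$, so
\[
q_0 - q_j = \sum_{i=0}^{p} a_i\bigl(1 - \zeta^{n_i j}\bigr) z_a^{-k_i}.
\]
Since every $a_i$ is nonzero, $\Slope(q_0 - q_j)$ is the largest $k_i$ with $\zeta^{n_i j}\neq 1$, i.e.\ with $r \nmid n_i j$. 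Writing $k_i = N_i/D_i$ in lowest terms, an application of $\gcd(N_i,D_i)=1$ rewrites this condition as $D_i \nmid j$.

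Next I would turn ``$k_m \in L(I)$'' into a divisibility statement. The value $k_m$ appears as the slope of some $q_0 - q_j$ with $j\in\{1,\dots,r-1\}$ if and only if there exists such a $j$ with $D_i \mid j$ for every $i<m$ and $D_m \nmid j$. Setting $L_m := \lcm(D_0,\dots,D_{m-1})$, the first group of divisibilities forces $L_m \mid j$, and one checks that an admissible $j$ in the range exists if and only if $D_m \nmid L_m$, i.e.\ if and only if $\lcm(D_0,\dots,D_m)$ is strictly greater than $\lcm(D_0,\dots,D_{m-1})$. This is exactly the inductive recipe in the statement: for $m < i_0$ all $D_m = 1$ and the lcm stagnates; at $m = i_0$ the first non-integer exponent produces the first strict jump; and iterating recovers the $i_j$, using the crucial remark that non-selected indices never raise the lcm, so $\lcm(D_0,\dots,D_{i_j}) = \lcm(D_{i_0},\dots,D_{i_j})$.

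For the converse, given a list $\{k_0>\dots>k_p\}$ whose partial-lcm-of-denominators sequence strictly grows, I would let $r$ be the full lcm of the $D_i$ and take $q := \sum_{i=0}^{p} z_a^{-k_i}$. Then $\Ram(q) = r$ (a smaller ramification order would force every $D_i$ to divide a proper divisor of $r$, contradicting that $r$ is the total lcm), and the forward computation gives $L(\cir{q}) = \{k_0,\dots,k_p\}$ because the hypothesis is precisely that $D_m \nmid L_m$ for every $m$.

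The hard part, I expect, is the careful arithmetic of the second paragraph, specifically ensuring that the candidate $j$ realising the required divisibility pattern lies in $\{1, \dots, r-1\}$; the boundary case $L_m = r$ turns out to coincide with $D_m \mid L_m$, so it is harmless but needs to be verified. A minor secondary point is to interpret the right-hand side of the inductive clause in the statement as the lcm of the denominators of $(k_{i_0},\dots,k_{i_j})$ (equivalently $(k_0,\dots,k_{i_{j+1}-1})$), which is what my computation naturally produces.
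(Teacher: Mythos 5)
The paper itself gives no proof of this proposition---it is quoted from \cite{boalch2022twisted}---so there is no in-paper argument to compare against; your proof is correct and is the expected direct computation. The key points are all handled properly: the reduction of ``$k_m\in L(I)$'' to the divisibility condition $D_m\nmid \lcm(D_0,\dots,D_{m-1})$, the verification that the witness $j=L_m$ lies in $\{1,\dots,r-1\}$ (precisely because $L_m=r$ would force $D_m\mid L_m$), and the observation that non-selected indices never raise the lcm, which is exactly what reconciles the two differently-indexed lists appearing in the inductive clause of the statement.
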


\begin{example} Let us look at a few simple examples.
\begin{itemize}
\item If $\cir{q}$ is a Stokes circle with $r=\Ram(q)$, $s=\Irr(q)$, and $s$ and $r$ coprime, then $L(\cir{q})=\{\frac{s}{r}\}=\{\Slope(q)\}$.
\item Consider $q=z^{5/3}+z^{4/3}+z^{1/2}$. We have $\Ram(q)=6$, and the sequence of common denominators is $(3,3,6)$, so we have $L(\cir{q})=\{\frac{5}{3},\frac{1}{2}\}$.
\end{itemize}
\end{example}

\subsection{Action of basic operations}

We now discuss how basic opererations act on the ramification order, slope, and level datum of a Stokes circle. We consider the following types of basic operations, defined on the set of isomorphism classes of irreducible algebraic connections $(E,\nabla)$ on Zariski open subsets of $\mathbb  P^1$, excluding rank one connections with only  singularity of order $\leq 2$ at infinity):
\begin{itemize}
\item  The Fourier transform $F$ (see e.g. \cite{arinkin2010rigid} for its definition as an operation on irreducible connections on $\mathbb P^1)$.
\item For any unramified exponential factor $q$ at some $a\in \mathbb  P^1$, the twist $T_q$, consisting in tensoring $(E,\nabla)$ with the connection $(\mathcal O, d-dq)$ on the trivial rank one bundle, having a singularity at $a$ with $\cir{q}$ as its single Stokes circle. 
\item For every Möbius transformation $M\in \Aut(\mathbb P^1)$, we write $M\cdot (E,\nabla):=M_*(E,\nabla)$ for the operation it induces on connections. 
\end{itemize}

Every basic operation $A$ induces a well-defined bijection of the set $\pi_0(\mathcal I)$ of all possible Stokes circles, that we also denote by $A$, in a such a way that if $A$ is a twist or a Möbius transformation, if $(E,\nabla)$ has global irregular class $\bm\Theta=\sum_i n_i \;\cir{q_i}$, then the global irregular class $\bm{\Theta}'$ of $A\cdot (E,\nabla)$ satisfies:
\[
\bm{\Theta}'=\sum_i n_i\; A\cdot \cir{q_i}.
\]
On the other hand, if $A=F$ is the Fourier transform, and $(E,\nabla)$ has modified irregular class $\bm{\breve\Theta}=\sum_i n_i \; A\cdot \cir{q_i}$, then the modified irregular class $\bm{\breve\Theta}'$ of $F\cdot(E,\nabla)$ satisfies
\[
\bm{\breve\Theta}'=\sum_i n_i\; F\cdot \cir{q_i}.
\]
(this would not be true with the non-modified irregular class, and is the reason why it is convenient to introduce the modified irregular class, see \cite{doucot2021diagrams}).\\

Among the basic operations, the Fourier transform is the one with the most intricate action on the slopes and level data of Stokes circles:

\begin{lemma}[\cite{doucot2024basic}]
\label{prop:levels_Fourier_transform}
Let $I$ be a Stokes circle. We have the following: 
\begin{enumerate}
\item If $I$ i.e. of the form $\cir{\alpha z}_\infty$, with $\alpha\in\IC$, then $F\cdot I=\cir{0}_{\alpha}$.
\item If $I$ is of slope $\leq 1$ at infinity, of the form $\cir{\alpha z +q}_\infty$, with $\alpha\in\mathbb C$, and $q\neq 0$ of slope $<1$ then $F\cdot I$ is of the form $F\cdot I=\cir{\widetilde{q}}_{\alpha}$, with $\Irr(\tilde q)=s, \Ram(\tilde q)=r-s$, $L(\tilde q)=\frac{r}{r-s}L(q)$.
\item If $I$ is of slope >1 at infinity, with $\Ram I=r$, $\Irr(I)=s$, then $F\cdot I$ satisifies $\Ram(F\cdot I)=s-r$, $\Irr(F\cdot I)=s$, and $L(F\cdot I)=\frac{r}{s-r}L(I)$.
\item If $I=\cir{q}_a$ is an irregular circle at finite distance for $a\in\mathbb C=\mathbb P^1\smallsetminus\{\infty\}$, with $q\neq 0$, $\Ram(q)=r$, and $\Irr(q)=s$, then $F\cdot I$ is a circle of slope $\leq 1$, at infinity, of the form $\cir{-az+\tilde{q}}_\infty$, with $\Ram(\tilde q)=r+s, \Irr(\tilde q)=s$, and $L(\tilde{q})=\frac{r}{r+s}L(q)$.
\item If $I=\cir{0}_a$ is a tame circle at finite distance, then $F\cdot I=\cir{-az}_\infty$.
\end{enumerate}
\end{lemma}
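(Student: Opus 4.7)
\medskip

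The plan is to verify each case via the classical stationary phase (Legendre transform) description of how the Fourier transform acts on formal exponential factors. Recall that the Fourier transform on $D$-modules over $\mathbb{A}^1$ exchanges $z\leftrightarrow -\partial_\zeta$ and $\partial_z\leftrightarrow \zeta$, and on formal solutions this translates into the rule that an elementary factor $e^{q(z)}$ is sent to $e^{\tilde q(\zeta)}$, where $\tilde q(\zeta)=q(z(\zeta))-z(\zeta)\zeta$ is obtained by evaluating at the critical point $z(\zeta)$ solving $q'(z)=\zeta$. Every claim can then be read off from the Puiseux characterization of the level datum already recorded in the proposition on $L(I)$ in terms of exponents of $q$.

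First I would handle the two ``tame extreme'' cases 1 and 5 directly from the definition. The connection $(\mathcal O,d-\alpha\,dz)$ has horizontal section $e^{\alpha z}$ and is the Fourier transform of the skyscraper (i.e.\ trivial rank one) $D$-module at $\zeta=\alpha$, giving case 1. Case 5 is obtained by the dual computation: the Fourier transform of the skyscraper at $z=a$ is the connection with horizontal section $e^{-a\zeta}$, i.e.\ $\cir{-a z}_\infty$ after renaming the coordinate.

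Next, the heart of the argument is cases 3 and 4. For case 3, write $q=\sum_i a_i z^{k_i}$ with leading exponent $k_0=s/r>1$. The critical equation $\zeta=q'(z)$ can be inverted iteratively as a Puiseux series
\[
z(\zeta)=\zeta^{1/(k_0-1)}\bigl(c_0+c_1\zeta^{(k_1-k_0)/(k_0-1)}+c_2\zeta^{(k_2-k_0)/(k_0-1)}+\dots\bigr),
\]
and substituting into $\tilde q=q-z\zeta$ yields a Puiseux polynomial in $\zeta$ whose exponents are exactly the $k_i/(k_0-1)=k_i\cdot r/(s-r)$. The leading exponent is $k_0\cdot r/(s-r)=s/(s-r)$, so $\Irr(\tilde q)=s$ and $\Ram(\tilde q)=s-r$. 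Using the Puiseux-exponent description of $L$, the list of levels of $\tilde q$ is obtained from that of $q$ by the uniform scaling $r/(s-r)$: this is because the sequence of successive least common denominators of the exponents is preserved under a common rescaling by $r/(s-r)$. Case 4 is analogous after pre-twisting by $T_{-az}$ to move the singularity from $a$ to $\infty$; the stationary phase then runs with $k_0<0$ (slope at infinity of $-az+q(z-a)$ equals $1$), and the level scaling factor becomes $r/(r+s)$, consistent with the fact that the inversion $z\sim \zeta^{1/(k_0-1)}$ is replaced by a small-$\zeta$ expansion around $\zeta=0$ at the new singularity.

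Finally, case 2 reduces to the previous ones by combining cases 1 and 4. Since $F$ intertwines the twist $T_{\alpha z}$ with translation by $\alpha$ in $\zeta$, writing $I=\cir{\alpha z+q}_\infty$ with $q$ of slope $<1$ shows that $F\cdot I$ is a Stokes circle at the finite point $\zeta=\alpha$, and the Legendre computation applied to $q$ (now with leading exponent $k_0=s/r<1$, so $k_0-1<0$) yields $\Irr(\tilde q)=s$, $\Ram(\tilde q)=r-s$, and level-scaling factor $r/(r-s)$. I expect the main obstacle to be the bookkeeping verification that the level-datum subsets in the sense of the Puiseux-exponent description (those exponents that strictly increase the running least common denominator) really match on both sides after the global rescaling of exponents; this is a matter of showing that the $\gcd$-behaviour of the sequence $(k_0,k_1,\dots)$ is invariant under common scaling, which is essentially immediate once stated but is the only place where one must be careful when passing from the slope/irregularity data to the full level datum.
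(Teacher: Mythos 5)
The paper does not actually prove this lemma: it is imported from \cite{doucot2024basic} and justified only by the one-line remark that it follows from the stationary phase formula of Malgrange--Fang--Sabbah. So your overall strategy --- the branchwise Legendre transform $\tilde q(\zeta)=q(z(\zeta))-z(\zeta)\zeta$ at the critical point $q'(z)=\zeta$, with the two skyscraper cases 1 and 5 handled directly --- is exactly the intended derivation, and your computation of $\Slope$, $\Irr$ and $\Ram$ in cases 2--4 from the leading exponent $k_0/(k_0-1)$ is correct.

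The gap is in the passage from the exponents of $\tilde q$ to its level datum, and the two auxiliary claims you invoke there are both false in general. First, the exponents of $\tilde q$ are \emph{not} ``exactly the $k_i/(k_0-1)$'': inverting $q'(z)=\zeta$ produces cross terms, so further exponents of the form $\bigl(k_0+\sum_i m_i(k_i-k_0)\bigr)/(k_0-1)$ appear. Second, and more seriously, the Puiseux-exponent selection rule defining $L$ (keep an exponent when the running common denominator jumps) is \emph{not} invariant under a common rational rescaling of the exponent set: the set $\{3,\tfrac52\}$ has level datum $\{\tfrac52\}$, while its rescaling by $\tfrac12$, namely $\{\tfrac32,\tfrac54\}$, has level datum $\{\tfrac32,\tfrac54\}$. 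This is not a hypothetical worry: for $I=\cir{z^3+z^{5/2}}_\infty$ (so $r=2$, $s=6$, $L(I)=\{\tfrac52\}$) the four branches of the Legendre transform are $\tilde q_\epsilon=c\,\epsilon^2\zeta^{3/2}+c'\epsilon\,\zeta^{5/4}+\dots$ with $c,c'\neq0$ and $\epsilon^4=1$, whose pairwise differences have slopes $\tfrac32$ and $\tfrac54$; hence $L(F\cdot I)=\{\tfrac32,\tfrac54\}$, which is not the rescaling $\tfrac{r}{s-r}L(I)=\{\tfrac54\}$ (one can confirm $\{\tfrac32,\tfrac54\}$ is the right answer because it gives $B_{\mathbf L'}=2=B_{\mathbf L}$, whereas $\{\tfrac54\}$ gives $0$). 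A correct argument must work directly with the definition $L(I)=\{\Slope(q_0-q_j)\}$ and track how each difference of Galois-conjugate branches transforms under the Legendre correspondence --- that is where the uniform factor $\tfrac{r}{s-r}$ really comes from --- and it also shows that the formula in case 3 should be understood for circles whose slope equals their top level (no unramified terms above it), which is the only regime in which the rest of the paper uses the precise form of $L(F\cdot I)$.
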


This is obtained as a consequence of the stationary phase formula of \cite{malgrange1991equations, fang2009calculation, sabbah2008explicit}. In particular, the Fourier transform exchanges Stokes circles of slope $\leq 1$ at infinity and Stokes circles at finite distance, and preserves the subset of Stokes circles at infinity of slope $>1$.

Moreover, if $q$ is an \emph{unramified} exponential factor at $a$, applying $T_q$ has effect of shifting by $q$ the exponential  factors at $a$:

\begin{lemma}
Let $I=\cir{q'}$ be a Stokes circle. If $\pi(I)=a$ then $T_q\cdot I=\cir{q+q'}_a$, otherwise $T_q\cdot I=I$. 
\end{lemma}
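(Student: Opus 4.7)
The plan is to unwind the definition of the twist operation $T_q$ as tensoring by the rank-one meromorphic connection $\mathcal{L}_q:=(\mathcal O, d-dq)$ on $\mathbb P^1$, and reduce the statement to the well-known additivity of exponential factors under tensor products. The main subtlety is to check that this tensor-product shift descends correctly to the set $\pi_0(\mathcal I)$ of Stokes circles, which is what uses the hypothesis that $q$ is unramified.

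First, I would observe that because $q$ is an unramified exponential factor at $a$, it extends to a rational function on $\mathbb P^1$ whose only pole is $a$. Consequently $\mathcal{L}_q$ is holomorphic at every $b\neq a$ (where its unique exponential factor is $0$), and at $a$ itself its unique Stokes circle is the singleton $\cir{q}_a=\{q\}\subset\mathcal I_a$.

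Next I would invoke the standard fact, a direct consequence of the Turrittin–Levelt normal form, that for two germs of meromorphic connections $(E_1,\nabla_1)$ and $(E_2,\nabla_2)$ at the same point $c$ with sets of local exponential factors $P_1,P_2\subset\mathcal I_c$, the set of exponential factors of $(E_1\otimes E_2,\nabla_1\otimes 1+1\otimes\nabla_2)$ is exactly $P_1+P_2=\{p_1+p_2\mid p_i\in P_i\}$. Applied to $(E,\nabla)\otimes \mathcal{L}_q$, this gives: at any $b\neq a$ the local exponential factors of $(E,\nabla)$ are unchanged, so $T_q\cdot I=I$ for $I\in\pi_0(\mathcal I_b)$; and at $a$, each local section $q'$ of $\mathcal I_a$ is shifted to $q+q'$.

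It remains to translate this shift of sections of $\mathcal I_a$ into the claimed action on Stokes circles. Here the unramified hypothesis enters: since $\Ram(q)=1$, the section $q$ is fixed by the local Galois action on $\mathcal I_a$, so addition by $q$ commutes with the formation of Galois orbits. If $q'_0,\dots,q'_{r-1}$ are the Galois conjugates of $q'$ comprising the Stokes circle $\cir{q'}_a$ (with $r=\Ram(q')$), then $q+q'_0,\dots,q+q'_{r-1}$ are exactly the Galois conjugates of $q+q'$, forming the Stokes circle $\cir{q+q'}_a$. This yields $T_q\cdot\cir{q'}_a=\cir{q+q'}_a$, completing the proof. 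The only non-purely-formal step is this Galois-compatibility check, but it is immediate from the absence of fractional exponents in $q$.
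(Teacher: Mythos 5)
Your proof is correct. The paper states this lemma without proof, deferring to the cited references for the standard treatment; your argument --- unwinding $T_q$ as tensoring by $(\mathcal O, d-dq)$, invoking the additivity of exponential factors of a tensor product via the Turrittin--Levelt normal form, and then checking that addition of the unramified $q$ commutes with the Galois action so that the orbit of $q+q'$ is exactly $\{q+q'_j\}$ --- is precisely the standard derivation, with the one genuinely non-formal point (Galois compatibility, which is where unramifiedness of $q$ is used) correctly identified and justified.
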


In particular, this implies that we always have $L(T_q\cdot I)=L(I)$, and $\Ram(T_q\cdot I)=\Ram(I)$. However, applying $T_q$ may to $I$ may change the slope: if $\Slope(q)>\Slope(I)$, then $\Slope(T_q\cdot I)=\Slope(q)>\Slope(I)$ so the slope increases. On the other hand, if $I$ has an integer slope $s$, with leading term $a z_a^{-s}$, with $a\neq 0$, taking $q=-a z_a^{-s}$, applying $T_q$ to $I$ kills the leading term, so that $\Slope(T_q\cdot I)<\Slope(I)$. 

Finally, if $M$ is a Möbius transformation, its only effect on the irregular class $\bm{\Theta}$ is to change the position of the singularities, that is we have:

\begin{lemma}
Let $I$ be a Stokes circle and $M$ a Möbius transformation. Then $\pi(M\cdot I)=M(\pi(I))$, and $\Ram(M\cdot I)=\Ram(I)$, $\Slope(M\cdot I)=\Slope(I)$ and $L(M\cdot I)=L(I)$. 
 
 \end{lemma}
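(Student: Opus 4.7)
The statement is local around $\pi(I)=a$, so the plan is to reduce everything to the fact that a Möbius transformation $M\in\Aut(\mathbb P^1)$ induces a biholomorphism between a neighbourhood of $a$ and a neighbourhood of $M(a)$, hence an isomorphism of real oriented blow-ups $\widehat M:\widehat\Sigma_a\to\widehat\Sigma_{M(a)}$ and therefore an isomorphism of exponential local systems $M_*:\mathcal I_a\to\mathcal I_{M(a)}$, defined by $M_*q=q\circ M^{-1}$ expressed in the local coordinate $z_{M(a)}$. The action on Stokes circles is then $M\cdot\cir{q}_a=\cir{M_*q}_{M(a)}$, from which the identity $\pi(M\cdot I)=M(\pi(I))$ is immediate.

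The central observation is that in all possible cases (both $a$ and $M(a)$ finite, or one of them at $\infty$) a direct computation with the explicit formula for $M$ and the standard coordinates $z_b$ for finite $b$ and $z_\infty=1/z$ shows that the induced coordinate change takes the form
\[
z_a \;=\; c_1 z_{M(a)} + c_2 z_{M(a)}^2 + \cdots
\]
with $c_1\in\mathbb C^*$. Substituting this into $q=\sum_{k=1}^s a_k z_a^{-k/r}$ (with $a_s\neq 0$ and $r=\Ram(q)$) and expanding each factor $(c_1 z_{M(a)}+\cdots)^{-k/r}$ by the binomial series produces a Puiseux expansion of $M_*q$ in $z_{M(a)}^{1/r}$ whose leading term is $a_s c_1^{-s/r}z_{M(a)}^{-s/r}$. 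This already shows that $\Ram(M_*q)$ divides $r$ and, when the ramifications coincide, that $\Irr(M_*q)=s$. Running the same argument for $M^{-1}$ gives the reverse divisibility, so both the ramification and the irregularity are preserved, and hence $\Slope(M\cdot I)=s/r=\Slope(I)$.

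For the level datum I would use its intrinsic description through Galois conjugates. Since $M_*:\mathcal I_a\to\mathcal I_{M(a)}$ is an isomorphism of covers, it maps the Galois orbit $\{q_0,\ldots,q_{r-1}\}$ of $q$ bijectively onto the Galois orbit of $M_*q$; concretely, a choice of $r$-th root of $c_1$ gives a permutation $\sigma$ such that $M_*(q_j)=(M_*q)_{\sigma(j)}$. Consequently the family of differences of Galois conjugates of $M_*q$ coincides, up to relabeling, with $\{M_*(q_0-q_j)\}_j$, and by the slope invariance proved in the previous paragraph, applied to each germ $q_0-q_j$, every such difference has the same slope as $q_0-q_j$. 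Taking the set of nonzero slopes yields $L(M\cdot I)=L(I)$. The main point requiring a little care is the compatibility of the Galois action with $M_*$, i.e.\ that $M_*$ genuinely respects the $\mathbb Z/r\mathbb Z$-covering structure of $\mathcal I_a$; once this is granted all invariances reduce to the invariance of slope under an analytic change of local coordinate, which is manifest from the binomial expansion, and the separate case analysis at $\infty$ is a routine verification.
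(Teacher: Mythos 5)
Your argument is correct, and it is worth noting that the paper itself gives no proof of this lemma: it appears in the reminders of Section~2, where the author defers to the references (Boalch's \emph{Topology of the Stokes phenomenon} and related work) for proofs. Your proposal supplies the standard argument those references use: a M\"obius map is a local biholomorphism, the induced coordinate change is $z_a=c_1z_{M(a)}+O(z_{M(a)}^2)$ with $c_1\neq 0$, and substituting into $q=\sum_{k=1}^s a_kz_a^{-k/r}$ preserves the leading exponent $-s/r$ with nonzero coefficient; the two-sided divisibility trick via $M^{-1}$ then pins down $\Ram$ and $\Irr$, hence the slope, and the Galois-equivariance of $M_*$ together with the linearity $M_*(q_0-q_j)=M_*(q_0)-M_*(q_j)$ handles the level datum. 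One point you should make explicit: after the substitution, $M_*q$ is a priori an infinite Puiseux series containing terms of nonnegative exponent, whereas sections of $\mathcal I_{M(a)}$ are Puiseux polynomials in $z_{M(a)}^{-1/r}$; the induced map on the exponential local system is defined by discarding this regular part. This truncation affects none of the invariants (the new exponents created are of the form $t-m$ with $m\in\mathbb Z_{\geq 0}$ and $t$ an exponent of $q$, so they neither change the leading term nor enlarge the successive common denominators entering the level datum), but without saying so the identity $(M^{-1})_*M_*q=q$ you invoke for the reverse divisibility only holds modulo bounded germs, which is exactly the equivalence under which $\mathcal I_a$ is defined. With that remark added, the proof is complete.
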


Möbius transformations will have a role to play when successively applying basic operations because they can be used to pass between singularities located at infinity and at finite distance.

\section{Simplification of Stokes circles}
\label{section:simplification_general_case}

Before discussing how to decrease the ramification order of an Stokes circle by applying basic operations, a straightforward observation is that it is always possible to increase it:

\begin{lemma}
Let $I$ be a Stokes circle. There exists a Möbius transformation $M$ such that applying $FM$ to $I$ increases its ramification order. 
\end{lemma}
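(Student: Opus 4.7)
The plan is to use a Möbius transformation to place $I$ at a finite point of $\mathbb P^1$ and then invoke case~(4) of Lemma~\ref{prop:levels_Fourier_transform}, which states that for a wild Stokes circle $\cir{q}_a$ at finite distance one has $\Ram(F\cdot\cir{q}_a)=r+s$ with $r=\Ram(q)$ and $s=\Irr(q)$. Since $s\geq 1$ whenever $q\neq 0$, this already gives a strict increase of ramification.

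First, I would write $r=\Ram(I)$ and $s=\Irr(I)$ and split on the location of $I$. If $\pi(I)\in\mathbb C$ is already at finite distance, I take $M=\mathrm{id}$; if $\pi(I)=\infty$, I take any Möbius transformation sending $\infty$ to a finite point, for instance $M\colon z\mapsto 1/z$. By the last lemma of Section~\ref{section:reminders}, Möbius transformations preserve ramification and slope, hence also irregularity, so in both cases $M\cdot I$ is a Stokes circle at a finite point with $\Ram(M\cdot I)=r$ and $\Irr(M\cdot I)=s$. Assuming $I$ is wild (which is the only interesting case, as tame circles have $\Ram=1$ and are mapped by $FM$ to tame circles), $M\cdot I$ is of the form $\cir{q}_a$ with $q\neq 0$ and $a\in\mathbb C$, so case~(4) of Lemma~\ref{prop:levels_Fourier_transform} directly applies and yields
\[
\Ram(FM\cdot I)=r+s>r=\Ram(I),
\]
as required.

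There is no real obstacle: the lemma is essentially a one-step composition of the transformation rules recalled in Section~\ref{section:reminders}, with the only choice being to pick $M$ so that case~(4) of Lemma~\ref{prop:levels_Fourier_transform} becomes available. The only subtlety to keep in mind is that $FM$ cannot turn a tame circle into a ramified one, so the statement is effectively about wild Stokes circles; this matches the intended use of the lemma, which is to contrast increasing ramification (always possible) with decreasing it (the subject of the rest of the section).
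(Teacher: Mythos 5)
Your proof is essentially identical to the paper's: both choose $M$ so that $\pi(M\cdot I)$ is at finite distance and then read off $\Ram(FM\cdot I)=r+s>r$ from the local Fourier transform rule (case (4) of Lemma~\ref{prop:levels_Fourier_transform}). Your explicit remark that the strict inequality requires $I$ to be wild (since $s=0$ for a tame circle) is a small point the paper's one-line proof leaves implicit, but it does not change the argument.
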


\begin{proof}
One just needs to choose $M$ such that $\pi(M\cdot I)\neq \infty$. Then, if $\Slope(I)=s/r$, with $r=\Ram(I)$, one has that $\Ram(FM\cdot I)=s+r>r$.
\end{proof}

In particular, for any Stokes circle, there is no upper bound on the ramification order of the elements of its orbit $\mathcal O\cdot I$ under basic operations.

\subsection{Elementary steps and the simplification map}

Let us now discuss how to decrease the ramification order. 

\begin{definition} Let $I$ be a Stokes circle. An \emph{elementary step} on $I$ is a sequence of basic operations consisting in applying to $I$ successively:
\begin{enumerate}
\item A Möbius transformation $M$ (possibly trivial);
\item A twist $T$ at $\pi(M\cdot I)$ (possibly trivial); 
\item The Fourier transform $F$.
\end{enumerate}
\end{definition}

The level data and the slopes of $FTM\cdot I$ do not depend on the exact coefficients of the twist or the Möbius transformations. This motivates the following definition.

\begin{definition}
Let $I$ be a Stokes circle. Two elementary steps $F T M$ and $F  T' M'$ on $I$ are equivalent if they satisfy the following conditions:
\begin{itemize}
\item $\pi(M\cdot I)$ and $\pi(M'\cdot I)$ are either both at infinity, or both at finite distance, and
\item A condition of the slopes:
\begin{itemize}
\item $T M\cdot I$ and $T' M'\cdot I$ have the same slope, or
\item $T M\cdot I$ and $T' M'\cdot I$ are both at infinity, and both are of slope $\leq 1$.
\end{itemize}
\end{itemize}

\end{definition}

It follows immediately from the transformation of level data under Fourier transform that:

\begin{lemma}
If $F T M$ and $F T' M'$ are two equivalent elementary steps on $I$, then the Stokes circles $F T M\cdot I$ and $F T' M'\cdot I$ have the same level datum and the same slope.
\end{lemma}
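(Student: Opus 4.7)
The strategy is to reduce the statement to a direct application of Lemma \ref{prop:levels_Fourier_transform}. The first step is to observe that twists and Möbius transformations preserve both the level datum and the ramification order of a Stokes circle, as recalled just after Lemma \ref{prop:levels_Fourier_transform}. Setting $J := TM\cdot I$ and $J' := T'M'\cdot I$, this gives $L(J) = L(J') = L(I)$ and $\Ram(J) = \Ram(J') = \Ram(I) =: r$, and the problem reduces to comparing the Fourier images $F\cdot J$ and $F\cdot J'$.

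Next I would carry out a case analysis matching the definition of equivalence. If $\pi(M\cdot I)$ and $\pi(M'\cdot I)$ are both at infinity with a common slope $s/r > 1$, case (3) of Lemma \ref{prop:levels_Fourier_transform} applies: the shared $r$ and shared slope force a shared irregularity $s$, so the formulas yield identical $\Ram(F\cdot) = s-r$, $L(F\cdot) = \tfrac{r}{s-r}L(I)$, and $\Slope(F\cdot) = s/(s-r)$ on both sides. If both are at infinity with slope $\leq 1$, case (2) of the lemma (degenerating to case (1) when $L(I) = \emptyset$) gives the image in the form $\cir{\tilde{q}}$; writing $J = \alpha z + q$ with $q$ the ramified tail, one has $\Ram(q) = r$ and $\Irr(q) = r\max L(I)$, quantities depending only on $L(I)$, so the Fourier formula yields the same $L$ and slope for $F\cdot J$ and $F\cdot J'$. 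Finally, if both are at finite distance with a common slope, cases (4)--(5) apply, and the same reasoning gives $L(F\cdot) = \tfrac{r}{r+s}L(I)$ together with a slope read off from the dominant $-az$ summand.

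The main subtlety I expect to have to address is the second case: $J$ and $J'$ may a priori have genuinely different slopes, for instance $\Slope(J) = 1$ coming from an $\alpha z$ summand while $\Slope(J') = \max L(I) < 1$, yet the Fourier images must agree. The key observation is that $\alpha z$ is Galois-invariant and therefore invisible both to the level datum and to the ramified tail driving formula (2): it records itself only as the finite position $\alpha$ of $F\cdot J$, while the $L$-datum and slope of the image are read off the ramified part alone, which is common to $J$ and $J'$ by the invariance of $L(I)$. An analogous bookkeeping check in the finite-distance case ensures that the slope of the image is a function of $L(I)$ and the common slope of $J,J'$, independently of the particular Möbius translation chosen.
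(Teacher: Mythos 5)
Your reduction to Lemma \ref{prop:levels_Fourier_transform}, after noting that twists and M\"obius transformations preserve both $L$ and $\Ram$, is exactly the route the paper intends (the paper offers no written proof, asserting the lemma is immediate from the transformation of level data under Fourier transform). Your handling of the two at-infinity cases is correct, and the observation that an $\alpha z$ summand is Galois-invariant --- hence invisible to the level datum and to the ramified tail, and recorded only as the finite position of the image --- is indeed the one nontrivial point there.

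The problem is the ``analogous bookkeeping check'' you defer in the finite-distance case: carrying it out does not confirm your claim that the slope of the image depends only on $L(I)$ and the common slope of $J,J'$. If $J=\cir{q}_a$ with $a\neq 0$, then $F\cdot J=\cir{-az+\tilde q}_\infty$ has slope $1$, since the unramified term $-az$ dominates $\tilde q$, whose slope is $s/(r+s)<1$; but if $a=0$ the image is $\cir{\tilde q}_\infty$ of slope $s/(r+s)$. The definition of equivalence only requires $\pi(M\cdot I)$ and $\pi(M'\cdot I)$ to both be at finite distance with $TM\cdot I$ and $T'M'\cdot I$ of equal slope, so one may well have $a=0$ and $a'\neq 0$, in which case $F\cdot J$ and $F\cdot J'$ have the same level datum but different slopes. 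Thus the step ``slope read off from the dominant $-az$ summand'' fails when that summand is absent, and the slope assertion cannot be established by your argument in this case without either a further normalization excluding the origin or a weakening of the conclusion to ``same level datum and slope $\leq 1$'' at finite distance. This does not affect your level-datum argument, which is the only part used downstream (e.g.\ in Prop.~\ref{prop:unique_simplifying step}, where the finite-distance case is handled purely through the increase of the ramification order).
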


The main observation leading to the simplication algorithm is that, up to equivalence, there is only at most one elementary step reducing the ramification order.

\begin{proposition}
\label{prop:unique_simplifying step}
Let $I$ be a twisted Stokes circle, then:

\begin{itemize}
\item There is at most one equivalence class of elementary steps strictly reducing its ramification order of $I$. 
\item There is at most one equivalence class of elementary steps preserving the ramification order of $I$. If it exists, it also preserves the level datum of $I$.
\end{itemize}
\end{proposition}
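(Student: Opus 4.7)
The plan is to analyze an elementary step $FTM$ by first determining the slope and position that $TM\cdot I$ must have for the final Fourier step to strictly decrease (resp.\ preserve) the ramification, and then to show that these constraints pin down a unique equivalence class.

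Since Möbius transformations and twists preserve $\Ram$, only the Fourier step can change it. Scanning Lemma \ref{prop:levels_Fourier_transform}, $F$ strictly decreases $\Ram$ only for a Stokes circle at infinity of slope $\leq 1$ (case (2), $r\mapsto r-s$) or at infinity of slope strictly between $1$ and $2$ (case (3) with $r<s<2r$), and preserves $\Ram$ only in case (3) with slope exactly $2$. Since $I$ is twisted and $M, T$ preserve $\Ram$, $TM\cdot I$ is not tame, so at finite distance case (4) applies and always strictly increases $\Ram$. Hence $TM\cdot I$ must lie at infinity.

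The key structural observation is that a twist at infinity adds to the Puiseux exponent a polynomial in $z$, hence can only modify the integer-exponent terms of the expansion and leaves the non-integer ones invariant. Since $I$ is twisted, there is a well-defined largest non-integer exponent $\sigma_R\in\mathbb Q_{>0}\smallsetminus\mathbb Z$ in its Puiseux expansion, which is invariant under both Möbius and twist. The slope of $TM\cdot I$ therefore equals $\max(\sigma_R,\sigma_T)$, where $\sigma_T\in\{-\infty,1,2,\dots\}$ is the largest integer exponent left in (or introduced into) the expansion by the twist.

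The case analysis on $\sigma_R$ is now direct. If $\sigma_R>2$, any $\sigma_T$ gives slope $\geq\sigma_R>2$, so no elementary step can decrease or preserve $\Ram$. If $1<\sigma_R<2$, decrease is attainable exactly when $\sigma_T\leq 1$, giving slope $\sigma_R\in(1,2)$ (case (3)); all such steps have the same slope, forming one equivalence class. If $\sigma_R<1$, decrease is attainable exactly when $\sigma_T\leq 1$, giving slope $\leq 1$ at infinity (case (2)); all such steps are equivalent by the ``both at infinity, slope $\leq 1$'' rule. In both of these subcases, preservation requires slope exactly $2$, forcing $\sigma_T=2$ and producing the unique preservation class. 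For this preserving class one has $s=2r$, so case (3) of Lemma \ref{prop:levels_Fourier_transform} yields
\[
L(FTM\cdot I)=\tfrac{r}{s-r}L(TM\cdot I)=L(I),
\]
using that Möbius and twists preserve the level datum, which gives the last assertion. The main hurdle will be the careful verification that the set of non-integer Puiseux exponents of $I$ is a genuine Möbius- and twist-invariant, so that $\sigma_R$ really is an invariant of $I$; once that is established, the case analysis is routine.
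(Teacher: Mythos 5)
Your argument is correct and follows essentially the same route as the paper's proof: isolate the largest non-integer exponent $\sigma_R=\max L(I)$ as the controlling invariant, observe that twists and M\"obius transformations can only rearrange the integer exponents sitting above it, and run the case analysis on $\sigma_R$ versus $1$ and $2$ combined with the ramification formulas of Lemma \ref{prop:levels_Fourier_transform}. One caution on the point you defer to the end: the full \emph{set} of non-integer Puiseux exponents is not a M\"obius invariant (re-expanding in the new local coordinate can create lower-order fractional terms), but the level datum is, and since $\sigma_R$ is its largest element, the invariance of $\sigma_R$ --- the only fact your proof actually uses --- follows immediately from the lemmas of Section \ref{section:reminders}.
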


\begin{proof}
Let us write $I=\cir{q}$, $a=\pi(q)$, and ${k_0 >\dots >k_p}=L(I)$. The exponential factor $q$ may have integer exponents (strictly) higher than $k_0$ with nonzero coefficients, i.e. it is of the form

\[
q=\lambda_1 z_a^{-n_1}+\dots+ \lambda_p z_a^{-n_p} + \mu z_a^{-k_0}+ \text{ lower order terms}.
\]
with $p\geq 0$ an integer, $p=0$ corresponding to the case where there are no higher order terms, and for $p\geq 1$,  $n_1, \dots, n_p$ integers such that $n_1>\dots >n_p>k_0$, and $\lambda_1, \dots, \lambda_p\in \mathbb C^*$, $\mu\in \mathbb C^*$.

Let us analyse the possible forms of Stokes circles than can be obtained from $I$ by applying an elementary step $FT_{q'}M$.

The Stokes circles that can be obtained as $T_{q'}M\cdot I$ for some choice of $T_{q'}, M$ are of the form $\widetilde{I}=\cir{\widetilde{q}}$, with $L(\widetilde{I})=L(I)$, and

\[
\widetilde{q}=\alpha_1 z_{\widetilde{a}}^{-m_1}+\dots + \dots +\alpha_l z_{\widetilde{a}}^{-m_l} + \mu' z_{\widetilde{a}}^{-k_0}+\text{ lower order terms}
\]
where $l=0$ corresponding to the case when there are no higher order terms, and for $l\geq 1$,  $m_1 > \dots > m_l> k_0$ are integers, and $\alpha_0, \dots, \alpha_l\in\mathbb C^*$, $\mu\in \mathbb C^*$. Any set of integer exponents $m_1> \dots > m_l> k_0$ can be obtained for some appropriate choice of $T_{q'}, M$.

Now we have the following possibilities for $F\cdot \widetilde{I}$: 

\begin{itemize}
\item If $\widetilde{a}\neq \infty$, then we always have $\Ram(F \cdot \widetilde{I})>\Ram(I)$
\item Otherwise, if $\widetilde{a}=\infty$;
\begin{itemize}
\item If $1<k_0$,

\begin{itemize}
\item If $l\geq 1$ and $m_1\geq 3$ then $\Ram(F \cdot \widetilde{I})>\Ram(I)$.
\item If $l\geq 1$ and $m_1=2$ then $\Ram(F \cdot \widetilde{I})=\Ram(I)$, and $L(F \cdot \widetilde{I})=L(I)$.
\item Otherwise if $l\geq 1$ and $m_0=1$, or if $l=0$, then $\Ram(F \cdot \widetilde{I})<\Ram(I)$.
\end{itemize}

\item If $1<k_0<2$,

\begin{itemize}
\item If $l\geq 1$ and $m_1\geq 3$ then $\Ram(F \cdot \widetilde{I})>\Ram(I)$.
\item If $l\geq 1$ and $m_1=2$ then $\Ram(F \cdot \widetilde{I})=\Ram(I)$, and $L(F \cdot \widetilde{I})=L(I)$.
\item Otherwise, if $l=0$, then $\Ram(F \cdot \widetilde{I})<\Ram(I)$.
\end{itemize}

\item If $k_0>2$, 

\begin{itemize}
\item If $l\geq 1$, then $m_1\geq 3$, and $\Ram(F \cdot \widetilde{I})>\Ram(I)$.
\item Otherwise, if $l=0$, then $\Ram(F \cdot \widetilde{I})>\Ram(I)$.
\end{itemize}

\end{itemize}

\end{itemize}

We see that for any value of the greatest level $k_0$, there is at most one of the above subcases where the ramification order is strictly reduced. It follows from the definition of equivalence of elementary steps that each case corresponds to exactly one equivalence class of elementary steps, which concludes the proof. 
\end{proof}

We say that an elementary step strictly reducing the ramification order of $I$ is a \emph{simplifying step} for $I$. The proposition shows that if such a step exists, there is a unique equivalence class of simplifying steps, which we we call \emph{the} simplifying step of $I$. 

Let $\mathbb L$ denote the set of all possible level data. If $I$ is an Stokes circle, and $FTM$ is a simplifying step for $I$, the level datum of $FTM\cdot I$ only depends on $L(I)$, so we have a well-defined simplification map
\[ S: \mathbb L\mapsto \mathbb L,
\]
where we set $S(\mathbf L)=\mathbf L$ when $\mathbf L$ is empty, or doesn't admit a simplifying step.

Let $\mathbb L^{min}\subset \mathbb L$ be the set of level data that don't admit a simplifying step, i.e. that are fixed points of $S$. From the proof of the proposition, we immediately have the following characterization:

\begin{proposition}
\label{prop:characterization_minimal_level_data}
Let $\mathbf L$ be a level datum. Then $\mathbf L\in \mathbb L^{\min}$ if and only if $\mathbf L$ is empty, or its highest level $k$ satisfies $k>2$. 
\end{proposition}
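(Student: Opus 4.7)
The plan is to read off the characterization directly from the case analysis already carried out in the proof of Proposition \ref{prop:unique_simplifying step}, splitting according to the value of the highest level $k_0$ of $\mathbf L$. A useful preliminary observation is that every element of a level datum is, by construction in the proposition describing levels in terms of exponents, a \emph{non-integer} positive rational, so the borderline values $k_0=1$ and $k_0=2$ cannot occur; this is what ensures that the two implications below meet cleanly.

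First, if $\mathbf L$ is empty then $S(\mathbf L)=\mathbf L$ by convention, so $\mathbf L\in\mathbb L^{min}$. Assume from now on that $\mathbf L$ is nonempty with highest level $k_0\in\mathbb Q_{>0}\setminus\mathbb Z$. For the forward direction, suppose $k_0>2$. I would fix any Stokes circle $I$ with $L(I)=\mathbf L$ and any elementary step $FTM$, set $\tilde I:=TM\cdot I$, and show $\Ram(F\cdot\tilde I)>\Ram(I)$. If $\pi(\tilde I)\neq\infty$, then Lemma \ref{prop:levels_Fourier_transform}(4) gives $\Ram(F\cdot\tilde I)=\Ram(\tilde I)+\Irr(\tilde I)>\Ram(\tilde I)=\Ram(I)$. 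If $\pi(\tilde I)=\infty$, then the slope of $\tilde I$ equals $\max(m_1,k_0)$ where $m_1$ is the largest integer exponent above $k_0$ appearing with nonzero coefficient in the Puiseux polynomial of $\tilde I$ (absent if no such term occurs); since $k_0>2$ one has $m_1\geq 3$ if present, so in either case $\Slope(\tilde I)>2$, and Lemma \ref{prop:levels_Fourier_transform}(3) gives $\Ram(F\cdot\tilde I)=\Irr(\tilde I)-\Ram(\tilde I)>\Ram(I)$.

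For the converse, suppose $\mathbf L$ is nonempty with $k_0<2$; I would exhibit an explicit simplifying step. Pick any $I=\cir{q}$ with $L(I)=\mathbf L$, apply a Möbius transformation $M$ with $\pi(M\cdot I)=\infty$, and write the Puiseux polynomial of $M\cdot I$ as a sum of finitely many integer-exponent monomials above $k_0$ plus a $z^{k_0}$ term plus lower-order terms. Apply a twist $T$ at infinity that cancels precisely the integer-exponent terms above $k_0$ (this is possible since those terms are unramified; the level datum is preserved by twists). Then $TM\cdot I$ is at infinity with slope exactly $k_0$ and ramification $r=\Ram(I)$. Applying $F$ and using Lemma \ref{prop:levels_Fourier_transform}: if $k_0<1$, case (2) with $\alpha=0$ yields $\Ram(FTM\cdot I)=r-\Irr<r$; if $1<k_0<2$, case (3) yields $\Ram(FTM\cdot I)=\Irr-r<r$ since $\Irr/r=k_0<2$. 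In either subcase $FTM$ strictly reduces the ramification, so $\mathbf L\notin\mathbb L^{min}$. The essential work was done in Proposition \ref{prop:unique_simplifying step}, so there is no real obstacle beyond keeping the case distinction consistent with the non-integrality of $k_0$.
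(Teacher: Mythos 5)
Your proposal is correct and follows essentially the same route as the paper, which simply reads the characterization off from the case analysis on the highest level $k_0$ in the proof of Proposition~\ref{prop:unique_simplifying step} combined with the Fourier transform formulas of Lemma~\ref{prop:levels_Fourier_transform}. You merely spell out the details (non-integrality of $k_0$, the explicit simplifying step when $k_0<2$, and the growth of ramification when $k_0>2$) that the paper leaves implicit.
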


\subsection{Full simplification}

Now, we obtain a full simplification algorithm simply by repeatedly appying $S$.  

\begin{definition} Let $\mathbf L\in \mathbb L$ be a level datum. The full simplification algorithm consists in, starting from $\mathbf L$, applying iteratively $S$ as long as a simplifying step exists (i.e.  $S$ reduces the ramification order), and stop as soon as there is no simplifying step.
\end{definition}

The algorithm always terminates since a simplifying step strictly reduces the rank. If $\mathbf L$ is a level datum, let $\widehat{S}(\mathbf L)\in \mathbb L^{min}$ denote the level datum obtained when the algorithm terminates. This defines a full simplification map
\[
\widehat{S}:\mathbb L\to \mathbb L^{min}.
\]

We now show that the simplification algorithm always gives us a unique level datum with minimal ramification order in the orbit $\mathcal O\cdot \mathbf L$. 

\begin{theorem}
\label{thm:algorithm_gives_minimal_levels}
Let $\mathbf L\in \mathbb L$ be a level datum. Then its simplification $\widehat{S}(\mathbf L)$ is an element with minimal ramification order in the orbit $\mathcal O\cdot \mathbf L$. Furthermore, this minimal element is unique.
\end{theorem}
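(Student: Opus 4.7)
The plan is to reduce the statement to the single assertion that $\widehat{S}$ is constant on each orbit $\mathcal{O}\cdot \mathbf{L}$. Granting this, for any $\mathbf{L}'\in \mathcal{O}\cdot \mathbf{L}$ we have $\widehat{S}(\mathbf{L}')=\widehat{S}(\mathbf{L})\in \mathbb{L}^{min}$, and since $\widehat{S}$ is built by iterated \emph{strict} reductions of the ramification order, we get $\Ram(\mathbf{L}')\geq \Ram(\widehat{S}(\mathbf{L}))$, with equality if and only if $\mathbf{L}'\in \mathbb{L}^{min}$, in which case orbit-constancy of $\widehat{S}$ forces $\mathbf{L}'=\widehat{S}(\mathbf{L}')=\widehat{S}(\mathbf{L})$. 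This yields simultaneously the minimality of $\widehat{S}(\mathbf{L})$ and its uniqueness in $\mathcal{O}\cdot \mathbf{L}$.

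To prove constancy of $\widehat{S}$ on orbits, I first observe that any composition of basic operations applied to a single Stokes circle can be rearranged as a sequence of elementary steps followed by a final tail of twists and Möbius transformations (which preserves the level datum). This follows from the standard commutation relation $M\circ T_q = T_{q\circ M^{-1}}\circ M$ together with the additivity of twists at a common point: the subword of basic operations between any two consecutive occurrences of $F$ can thereby be brought to the form $T\circ M$, which combines with the following $F$ into an elementary step. Since twists and Möbius transformations preserve the level datum, an induction on the number of elementary steps reduces the problem to: whenever $\mathbf{L}_1, \mathbf{L}_2\in \mathbb{L}$ are linked by a single elementary step, $\widehat{S}(\mathbf{L}_1)=\widehat{S}(\mathbf{L}_2)$.

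By Proposition \ref{prop:unique_simplifying step}, such an elementary step either preserves the ramification (and then also the level datum, so $\mathbf{L}_1=\mathbf{L}_2$), or is the unique simplifying step (so $\mathbf{L}_2=S(\mathbf{L}_1)$), or strictly increases the ramification. The first two cases trivially yield $\widehat{S}(\mathbf{L}_1)=\widehat{S}(\mathbf{L}_2)$; the main obstacle is the third, ramification-increasing case.

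For this I will use the fact that $F^{-1}$ can itself be realised as an elementary step. Writing $M_-$ for the Möbius transformation $z\mapsto -z$, a direct check from Lemma \ref{prop:levels_Fourier_transform} (e.g.\ on a tame circle: $F^2\cir{0}_a=F\cir{-az}_\infty=\cir{0}_{-a}=M_-\cdot \cir{0}_a$, and similarly on the other cases) shows that $F^2=M_-$ as bijections of $\mathcal{S}$, whence $F^{-1}=F\circ M_-$; this is precisely the elementary step with trivial twist and Möbius $M_-$. Now suppose the elementary step $FTM$ takes $I$ to $J=FTM\cdot I$ with $L(I)=\mathbf{L}_1$, $L(J)=\mathbf{L}_2$ and $\Ram(J)>\Ram(I)$. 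Then $F(M_-J)=F^{-1}J=TM\cdot I$, which is a Stokes circle with level datum $\mathbf{L}_1$ and ramification $\Ram(I)<\Ram(J)$. Hence the elementary step $F\circ \mathrm{id}\circ M_-$ applied to $J$ is a simplifying step for $\mathbf{L}_2$, and the uniqueness part of Proposition \ref{prop:unique_simplifying step} forces $S(\mathbf{L}_2)=\mathbf{L}_1$. Therefore $\widehat{S}(\mathbf{L}_2)=\widehat{S}(S(\mathbf{L}_2))=\widehat{S}(\mathbf{L}_1)$, which closes the induction and completes the proof.
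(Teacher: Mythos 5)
Your proof is correct, and it rests on the same two pillars as the paper's: the rewriting of an arbitrary word of basic operations as a chain of elementary steps, and the uniqueness of the simplifying step from Proposition \ref{prop:unique_simplifying step}. The difference is in the bookkeeping and in one point of rigour. The paper fixes a circle $I$ with level datum $\widehat{S}(\mathbf L)$, takes a word $E_m\cdots E_1$ with $\Ram(E_m\cdots E_1\cdot I)\leq \Ram(I)$, and argues by induction on $m$ via a ``peak removal'': the ramification sequence must attain an interior maximum, and the plateau at the maximum can be excised because the descent on either side of it is the unique simplifying step, shortening the word. You instead prove the cleaner statement that $\widehat{S}$ is constant along every single elementary step and deduce both minimality and uniqueness formally from orbit-constancy; this avoids the peak combinatorics entirely. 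More importantly, both arguments need the same key fact in the ramification-increasing case --- that an elementary step which raises the ramification order can be undone by a \emph{simplifying} elementary step, so that $S(\mathbf L_2)=\mathbf L_1$ --- and the paper invokes this implicitly in the single clause ``by uniqueness of the simplifying step we have $L(I_i)=L(I_j)$'', whereas you justify it explicitly via the identity $F^2=M_-$ on $\pi_0(\mathcal I)$, realising $F^{-1}=F\circ M_-$ as the elementary step with trivial twist. That verification (a consequence of Lemma \ref{prop:levels_Fourier_transform}, or of Fourier inversion $F\circ F\cong a^*$ with $a(z)=-z$) is exactly the detail the paper leaves to the reader, so your write-up is, if anything, more complete on the one step where completeness matters.
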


In particular, the theorem implies that Stokes circles which are non-simplifiable are actually of minimal ramification order in their orbit, which justifies the notation $\mathbb L^{min}$.

\begin{proof}
Let us consider a Stokes circle $I$ with level datum $\widehat{S}(\mathbf L)$, and a composition $B=B_n\dots B_1$ of basic operations such that $\Ram(B\cdot I)\leq \Ram(\widehat{S}(\mathbf L))$. We want to show that $L(B\cdot I)=L(I)=\widehat{S}(\mathbf L)$. Since Möbius transformations and twists commute, the composition two Möbius transformations is a Möbius transformation and the composition of two twists is a twist, up to reordering and regrouping the $B_i$, we can always write $B$ as a composition of elementary steps $E_1,\dots, E_m$. 

Let us write $I_0=I$, and for $i=1,\dots, m$, $I_i:=E_i\dots E_1\cdot I$. Also, we set $r_i:=\Ram(I_i)$ for $i=0, \dots m$.

Using prop. \ref{prop:unique_simplifying step}, we show by induction on the length $m$ that $L(I_m)=L(I)$. 

First, if $m=1$, since $\widehat{S}(\mathbf L)$ is the stopping point of the algorithm, there is no simplifying step, so $r_1=r_0$, the step $E_1$ preserves the ramification order, and in turn $L(I_1)=L(I_0)=L(I)$.

Now assume $m\geq 2$. If the sequence $r_0, \dots, r_m$ is constant, then we have $L(I_0)=\dots=L(I_m)$ and we are done. Otherwise, since $\widehat{S}(\mathbf L)$ is the stopping point of the algorithm, if $i$ is the smallest integer $j\geq 1$ such that $r_j\neq r_0$, we have $r_i>r_0$, so in particular the maximum $r_{max}$ of $r_0, \dots, r_m$ satisfies $r_{max}>r_0$, and also $r_{max}>r_m$.

In turn there exist indices $i,j\in \{0, m\}$ with $j\geq i+2$ such that we have
\[
r_i < r_{max}=r_{i+1}=\dots=r_{j-1}> r_j.
\]
But since by Prop. \ref{prop:unique_simplifying step} we have $L(I_{i+1})=\dots=L(I_{j-1})$, and then by uniqueness of the simplifying step we have $L(I_i)=L(I_j)$. 

This means that $L(I_m)$ can also be reached from $L(I_0)$ by a sequence $E_1, \dots E_i, \widetilde{E}_{i+1}, E_{j+1}, \dots E_m$ of elementary steps of length $<m$, so by the induction hypothesis we have $L(I_m)=L(I_0)$.   
\end{proof}

As an application, we can characterize when it is possible to fully simplify a Stokes circle to obtain a tame circle.

\begin{definition}
A Stokes circle $I$ is \emph{tamable} if it can be brought to a tame circle $\cir{0}$ at some $a\in\mathbb P^1$ by successive application of basic operations, i.e. if $\emptyset \in \mathcal O\cdot I$.
\end{definition}

\begin{corollary}
Let $I$ be a Stokes circle with level datum $\mathbf L$. Then $I$ is tamable if and only if $\widehat{S}(\mathbf L)=\emptyset$.
\end{corollary}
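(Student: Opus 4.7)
The plan is to prove both implications by combining Theorem \ref{thm:algorithm_gives_minimal_levels} with the elementary observation that a Stokes circle has empty level datum if and only if its ramification order equals $1$, together with the fact that any unramified Stokes circle can be finished off by a single twist.

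For the ``only if'' direction, I would argue as follows. Suppose $I$ is tamable, so that $\cir{0}_a \in \mathcal{O}\cdot I$ for some $a\in \mathbb{P}^1$. Since a tame circle has ramification $1$ and empty level datum, $\emptyset \in \mathcal{O}\cdot \mathbf L$. Now $\emptyset$ achieves the absolute minimum value $1$ of the ramification order; by the uniqueness of the minimally ramified element in the orbit asserted by Theorem \ref{thm:algorithm_gives_minimal_levels}, this minimal element must be $\widehat{S}(\mathbf L)$, and hence $\widehat{S}(\mathbf L)=\emptyset$.

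For the ``if'' direction, assume $\widehat{S}(\mathbf L)=\emptyset$. By construction of the full simplification map, applying the algorithm to $I$ itself produces, after finitely many elementary steps, a Stokes circle $I'\in \mathcal{O}\cdot I$ with $L(I')=\emptyset$. Having empty level datum means $\Ram(I')=1$, so $I'=\cir{q'}_a$ for some unramified exponential factor $q'=\sum_{k=1}^{s}\alpha_k z_a^{-k}$ (possibly with $q'=0$). If $q'=0$, then $I'$ is already a tame circle and we are done. Otherwise, since $q'$ is unramified, the twist $T_{-q'}$ is a well-defined basic operation at $a$, and by the shift formula for twists we have $T_{-q'}\cdot I' = \cir{0}_a$, establishing tamability of $I$.

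The argument presents essentially no obstacle, as it is a direct corollary of the classification theorem and the simple structural fact that empty level datum coincides with ramification order one. The only point requiring a brief check is that the algorithm, applied to our fixed $I$ rather than merely some representative of $\mathbf L$, genuinely reaches a circle with level datum $\widehat{S}(\mathbf L)$; this is built into the definition of $S$, which depends on its input only through the level datum and which is realized by actual elementary steps that can be performed on any Stokes circle with the given level datum.
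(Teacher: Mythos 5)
Your proof is correct and follows the route the paper intends: the corollary is stated as an immediate consequence of Theorem \ref{thm:algorithm_gives_minimal_levels}, using exactly your observation that $\emptyset$ is the unique level datum of ramification order $1$, hence is the unique minimally ramified element of the orbit whenever it occurs in it. Your extra step in the ``if'' direction --- twisting an unramified but nonzero factor $\cir{q'}$ down to the tame circle by $T_{-q'}$ --- is a worthwhile clarification of the paper's slight conflation of ``empty level datum'' with ``tame'' in the definition of tamability.
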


\begin{example}
Consider the level datum $\mathbf L=\{\frac{3}{5},\frac{2}{15},\frac{2}{105}\}=\{\frac{63}{105},\frac{14}{105},\frac{2}{105}\}$. Running the algorithm gives 
\begin{align*}
\left\{\frac{3}{5},\frac{2}{15},\frac{2}{105}\right\}=\left\{\frac{63}{105},\frac{14}{105},\frac{2}{105}\right\} &\overset{S}{\longmapsto} \left\{\frac{63}{42}, \frac{14}{42}, \frac{2}{42}\right\}=\left\{\frac{3}{2}, \frac{1}{3}, \frac{1}{21}\right\}\\
&\overset{S}{\longmapsto} \left\{\frac{14}{21}, \frac{2}{21}\right\}=\left\{\frac{2}{3}, \frac{2}{21}\right\}\\
  &\overset{S}{\longmapsto} \left\{\frac{2}{7}\right\}\\
  &\overset{S}{\longmapsto} \left\{\frac{2}{5}\right\}\\
  &\overset{S}{\longmapsto} \left\{\frac{2}{3}\right\}\\
  &\overset{S}{\longmapsto} \emptyset, 
\end{align*}
which shows that $\mathbf L$ is tamable.
\end{example}

The Katz-Deligne-Arinkin algorithm implies that if $(E,\nabla)$ is an irreducible rigid irregular connection on $\mathbb P^1$, then any of its Stokes circles is tamable. In turn we have:

\begin{corollary}
Let $(E,\nabla)$ be an irreducible rigid irregular connection on $\mathbb P^1$, $I$ a Stokes circle of $(E,\nabla)$, and $\mathbf L=L(I)$. Then $\widehat{S}(\mathbf L)=\emptyset$.
\end{corollary}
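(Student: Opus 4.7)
The plan is a straightforward combination of the Katz--Deligne--Arinkin algorithm with the previous corollary. The KDA algorithm, recalled just before the statement, produces for any irreducible rigid irregular connection $(E,\nabla)$ on $\mathbb P^1$ a finite composition $A = A_m \cdots A_1$ of basic operations bringing $(E,\nabla)$ to the trivial rank one connection. Since each basic operation induces a well-defined self-bijection of the set $\pi_0(\mathcal I)$ of all Stokes circles, we can track $I$ through this sequence to obtain $A \cdot I \in \pi_0(\mathcal I)$.

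The key intermediate step is to verify that $A \cdot I$ is a tame circle. This is precisely the tamability claim stated in the paragraph preceding the corollary: at the final step of KDA, the connection has rank one with trivial irregular class, so every non-tame Stokes circle of $(E,\nabla)$ must have been brought to a tame one by the time the algorithm terminates. A careful justification would track the Stokes circles of the intermediate connections step by step and handle the subtlety (pointed out earlier in the paper) that the bijection on $\pi_0(\mathcal I)$ need not preserve the property of being a Stokes circle of the underlying connection when tame circles are involved. One checks that at every intermediate stage, either $I$ has already been brought to a tame circle, or it continues to appear as a non-tame Stokes circle of the current connection, so the tracking remains compatible with the action on connections until tameness is reached. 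Thus $\emptyset \in \mathcal O \cdot \mathbf L$.

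Having established tamability, the conclusion $\widehat{S}(\mathbf L) = \emptyset$ is immediate from the previous corollary, which asserts that $I$ is tamable if and only if $\widehat{S}(\mathbf L) = \emptyset$. The sole nontrivial ingredient is the well-known tamability of Stokes circles of irreducible rigid irregular connections on $\mathbb P^1$, which is a consequence of KDA, and is the main place where one would have to be careful if writing out full details.
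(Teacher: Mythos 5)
Your proposal matches the paper's argument exactly: the paper derives this corollary by noting that the Katz--Deligne--Arinkin algorithm implies every Stokes circle of an irreducible rigid irregular connection is tamable, and then invoking the immediately preceding corollary characterizing tamability as $\widehat{S}(\mathbf L)=\emptyset$. The extra care you flag about tracking tame circles through the algorithm is a reasonable caveat, but the paper simply takes the tamability consequence of KDA as given and supplies no further detail.
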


Equivalently, if $\widehat{S}(\mathbf L)\neq\emptyset$, then no irreducible rigid irregular connection on $\mathbb P^1$ admits a Stokes circle with level datum $\mathbf L$.

As a further example, let us discuss the particular case of Stokes circles with just one level. 

\begin{lemma}
Let $\mathbf L=\{\frac{s}{r}\}$, with $r,s$ coprime, be a level datum consisting of a single level. We have:
\begin{itemize}

\item If $\frac{s}{r}<1$, then
\[
S(\mathbf L)=\left\lbrace
\begin{array}{cc}
\emptyset  &\text{ if } r=s+1, \\
\left\{\frac{s}{r-s}\right\}  &\text{ otherwise. } \\
\end{array}
\right.
\]

\item If $1<\frac{s}{r}<2$, then
\[
S(\mathbf L)=\left\lbrace
\begin{array}{cc}
\emptyset  &\text{ if } s=r+1, \\
\left\{\frac{s}{r}\right\}  &\text{ otherwise. } \\
\end{array}
\right.
\]

\item If $\frac{s}{r}>2$, then
\[
S(\mathbf L)=
\left\{\frac{s}{s-r}\right\}. 
\]
\end{itemize}
\end{lemma}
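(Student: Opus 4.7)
The plan is to read off $S(\{s/r\})$ from the explicit Fourier formulas in Lemma~\ref{prop:levels_Fourier_transform}, once a convenient representative Stokes circle has been fixed. Because the level datum of the output of an elementary step depends only on $L(I)$ and the equivalence class of the step, I would take the bare Puiseux monomial $I=\cir{z_\infty^{-s/r}}_\infty$ at infinity, obtained from any representative of $\{s/r\}$ after a suitable Möbius transformation and the cancellation (via twist) of integer-exponent terms that do not affect the level datum. For this $I$, $\Ram(I)=r$, $\Irr(I)=s$ and $\Slope(I)=s/r$. Specialising the casework of Proposition~\ref{prop:unique_simplifying step} to a single-level datum, the only elementary step that has any chance of reducing $\Ram(I)$ is $F$, equivalently $FT_{\alpha z}$ for $\alpha\in\mathbb C^*$, so the whole argument reduces to substitution in Lemma~\ref{prop:levels_Fourier_transform}.

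When $s/r<1$, the slope of $I$ is strictly less than~$1$ and case~(2) of Lemma~\ref{prop:levels_Fourier_transform} applies, giving $\Ram(F\cdot I)=r-s$ and $L(F\cdot I)=\tfrac{r}{r-s}\{s/r\}=\{s/(r-s)\}$. Since $r-s<r$, $F$ is a genuine simplifying step; at the boundary $r=s+1$ the new ramification equals~$1$, so the output is untwisted and $S(\mathbf L)=\emptyset$, matching the first case of the lemma.

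When $s/r>1$, the slope of $I$ exceeds~$1$ and case~(3) of Lemma~\ref{prop:levels_Fourier_transform} applies: $\Ram(F\cdot I)=s-r$ and $L(F\cdot I)=\tfrac{r}{s-r}\{s/r\}=\{s/(s-r)\}$. The dichotomy is controlled by the sign of $s-2r$. In the regime $1<s/r<2$ one has $s-r<r$, so $F$ strictly reduces the ramification and is a simplifying step, with the boundary $s=r+1$ again collapsing the new ramification to $1$ and hence the new level datum to $\emptyset$. In the regime $s/r>2$ one instead has $s-r>r$, so $F$ increases the ramification and is not simplifying; inspecting the remaining elementary steps as in the proof of Proposition~\ref{prop:unique_simplifying step} (twists by $\alpha z^{-m}$ with $m\geq 2$ only push the slope higher, and a Möbius move to a finite point followed by $F$ lands in case~(4) of Lemma~\ref{prop:levels_Fourier_transform} with ramification $r+s>r$) shows that no elementary step reduces the ramification there, so that $S$ fixes $\{s/r\}$, consistently with Proposition~\ref{prop:characterization_minimal_level_data}. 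The only piece of arithmetic care is pinning down these $\emptyset$ boundary subcases, which in each regime correspond to the denominator of the new level collapsing to~$1$; this is the place where the case distinctions of the lemma genuinely arise, and where the main (very small) obstacle lies.
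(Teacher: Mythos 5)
Your proof is correct and takes the same route as the paper's: specialise the casework of Proposition \ref{prop:unique_simplifying step} to a single-level representative at infinity and read the answer off the Fourier formulas of Lemma \ref{prop:levels_Fourier_transform}; the paper's own proof does exactly this, spelling out only the case $s/r<1$ and leaving the other two regimes as ``similar''. One point you should be aware of: your conclusions in the regimes $1<s/r<2$ and $s/r>2$ do not match the displayed statement, which asserts $\{s/r\}$ in the first of these regimes and $\{s/(s-r)\}$ in the second. Your values --- $\{s/(s-r)\}$ for $1<s/r<2$ away from the boundary $s=r+1$, and the fixed point $\{s/r\}$ for $s/r>2$ --- are the correct ones: a level datum with highest level $>2$ lies in $\mathbb L^{min}$ by Proposition \ref{prop:characterization_minimal_level_data}, so $S$ must fix it, while for $1<s/r<2$ the Fourier transform is a genuine simplifying step and $L(F\cdot I)=\frac{r}{s-r}L(I)=\left\{\frac{s}{s-r}\right\}$; this is also exactly what the corollary following the lemma ($\widehat S(\{s/r\})=\{s/k\}$ with $r\equiv\pm k \bmod s$ and $1<k<s/2$) requires. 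The two ``otherwise'' entries of the last two bullets have evidently been interchanged by a typo in the statement, so the discrepancy reflects an error in the printed lemma rather than a gap in your argument.
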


\begin{proof}
This follows in a straightforward way from the proof of prop. \ref{prop:unique_simplifying step}, and the formulas of prop. \ref{prop:levels_Fourier_transform} for the level datum of the Fourier transform. More precisely, if $\frac{s}{r}<1$ and $I$ is a Stokes circle with slope $\frac{s}{r}$, and $E$ is a simplifying step for $I$, then $E\cdot I$ is a Stokes circle factor of slope $\frac{s}{r-s}$. There are then two possibilities: either this slope is an integer, which happens when $r=s+1$, so $E\cdot I$ is unramified, i.e. $L(E\cdot I)=\emptyset$; or otherwise $\frac{s}{r-s}$ is not an integer, and is the unique level of $E\cdot I$. The reasoning is similar for the cases $1<\frac{s}{r}<2$ and $\frac{s}{r}>2$.
\end{proof}

\begin{corollary}
Let $\mathbf L=\{\frac{s}{r}\}$, with $r,s$ coprime, be a level datum consisting of a single level. We have: 
\begin{itemize}
\item If $r\equiv \pm 1 \mod s$, then $\widehat{S}(\mathbf L)=\emptyset$.
\item Otherwise, $\widehat{S}(\mathbf L)=\{\frac{s}{k}\}$, where $k$ is the unique integer such that $1<k<\frac{s}{2}$, and $r\equiv \pm k \mod s$.
\end{itemize}
\end{corollary}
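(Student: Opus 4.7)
The plan is to iterate $S$ starting from $\{s/r\}$ using the previous lemma, and to identify a conserved quantity that pins down the terminal value. For each integer $r \geq 2$ coprime to $s$, I set
\[
k(r) := \min\bigl(r \bmod s,\ s - (r \bmod s)\bigr).
\]
Since $\gcd(r,s)=1$ excludes both $r \equiv 0$ and $r \equiv s/2 \pmod s$, this is a well-defined integer in $\{1, \ldots, \lfloor (s-1)/2 \rfloor\}$. Note that $k(r) = 1$ precisely when $r \equiv \pm 1 \pmod s$, and otherwise $k(r)$ is the unique integer in $(1, s/2)$ satisfying $r \equiv \pm k(r) \pmod s$. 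Identifying this invariant is the key conceptual step; the corollary then amounts to showing $\widehat{S}(\{s/r\}) = \emptyset$ when $k(r) = 1$ and $\widehat{S}(\{s/r\}) = \{s/k(r)\}$ otherwise.

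The first step is to verify that $k$ is preserved by $S$ whenever $S$ does not output $\emptyset$, which is a direct inspection of the three cases of the previous lemma: in the case $s/r < 1$, the reduction $r \mapsto r - s$ preserves the residue of $r$ modulo $s$; in the case $1 < s/r < 2$, the substitution $r \mapsto s - r$ negates this residue, leaving $k$ unchanged; in the case $s/r > 2$, the step is the identity. Coprimality with $s$ is manifestly preserved in each case, so the iteration remains well-defined.

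Next, I would observe that $S$ sends $\{s/r\}$ to $\emptyset$ if and only if the current $r$ equals $s+1$ or $s-1$, both of which satisfy $k=1$. Combined with invariance, this immediately gives one direction: if $k(r) \geq 2$, no iterate of $r$ can ever equal $s \pm 1$, so the algorithm must terminate at a fixed point of $S$, that is, at some $\{s/r^*\}$ with $r^* < s/2$; by invariance we must have $r^* = k(r)$. For the converse, if $k(r) = 1$, a short case check shows the iteration does reach $s+1$ or $s-1$: if $r > s$, the successive subtractions of $s$ necessarily land at $s+1$ or $s-1$ depending on whether $r \equiv 1$ or $-1 \pmod s$; if $r < s$, then $r \geq 2$ together with $k(r) = 1$ forces $r = s-1$, so $S$ outputs $\emptyset$ directly.

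The only mild subtlety, and the place where I would be careful, is to check that the iteration never produces an intermediate $r = 1$ (which would give the integer level $\{s\}$, not covered by the lemma). This is automatic, since reaching $r = 1$ from $r \geq 2$ via $r \mapsto r - s$ or $r \mapsto s - r$ would require first passing through $r = s+1$ or $r = s-1$, both of which halt the algorithm at $\emptyset$ before the offending step can be applied.
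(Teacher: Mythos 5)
Your proof is correct and takes essentially the same route as the paper, which disposes of the corollary in one line by iterating the previous lemma; your explicit invariant $k(r)=\min\bigl(r \bmod s,\ s-(r \bmod s)\bigr)$ together with the termination check simply makes precise what the paper leaves implicit. One remark: your step $r\mapsto s-r$ in the case $1<s/r<2$ relies on reading the lemma's second case as $S(\{s/r\})=\{s/(s-r)\}$ rather than the literal $\{s/r\}$ printed there -- this is clearly the intended value (the printed one would make $\{s/r\}$ a fixed point, contradicting the characterization of $\mathbb L^{min}$ and the example $\{5/3\}\mapsto\{5/2\}$), and your silent correction is the right call.
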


\begin{proof}
This follows immediately from the previous lemma, by applying $S$ repeatedly.
\end{proof}

In particular, $\mathbf L$ is tamable if and only if  $r\equiv \pm 1 \mod s$, which agrees with \cite[Thm 0.3]{hiroe2017ramified}.

\subsection{Local simplification with the singularity fixed at infinity}

If $(E,\nabla)$ is an irreducible connection on $\mathbb P^1$, neither the number of Stokes circles of its global irregular class $\bm{\Theta}$, nor the number of Stokes circles of its modified global irregular class $\bm{\breve\Theta}$ (which is also the number of nodes of the core diagram $\Gamma_c(E,\nabla)$ associated to $(E,\nabla)$ by the construction of \cite{doucot2021diagrams}), are preserved by basic operations: the first number is preserved by Möbius transformations and twists at one of the singularities, but not the Fourier transform, and the converse is true for the second number. As a consequence, for irreducible connections on $\mathbb P^1$, the property of having a single Stokes circle, or of having a core diagram consisting of a single vertex, are not preserved by basic operations. 

This is also reflected by the fact that if $\mathbf L$ is a level datum, $I$ is a Stokes circle with level datum $\mathbf L$, and $\mathbf L'=L(F\cdot I)$, in general we don't have $B_{\mathbf L}=B_{\mathbf L'}$. This is true however when both $I$ and $F\cdot I$ are at infinity. 

For these reasons, in the rest of the article, we will consider connections with a single Stokes circle on $\mathbb P^1$ located at infinity, and only basic operations which preserve this property. Given a Stokes circle $I$ at infinity, we allow for the following \emph{basic operations at infinity}:
\begin{itemize}
\item If $\Slope(I)>1$, the Fourier transform $F$. In that case we also have $\Slope(F\cdot I)>1$.
\item Twists at infinity.
\end{itemize}

We can adapt the constructions of elementary steps and the simplification algorithm of the previous section. 

\begin{definition} Let $I$ be a twisted Stokes circle at infinity. An \emph{local elementary step} on $I$ is a sequence of  basic operations at infinity consisting in applying to $I$ successively:
\begin{enumerate}
\item A twist $T$ at infinity (possibly trivial), such that $\Slope(T\cdot I)>1$.
\item The Fourier transform $F$.
\end{enumerate}
Two local elementary steps $F T$ and $F  T'$ on $I$ are \emph{equivalent} if $T\cdot I$ and $T'\cdot I$ have the same slope.
\end{definition}

In a similar way as before, we have:

\begin{lemma}
If $F T$ and $F T'$ are two local equivalent elementary steps on $I$, then the exponential factors $F T\cdot I$ and $F T'\cdot I$ have the same level datum and the same slope.
\end{lemma}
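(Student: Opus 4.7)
The plan is to track how $\Ram$, $\Irr$, and $L$ transform through each of the two stages of the elementary step, using invariants already collected in the paper. The key inputs are (i) the observation just before the lemma that a twist by an unramified exponential factor preserves both the ramification order and the level datum (since it only alters integer-exponent terms of the Puiseux polynomial, and the level datum depends only on the non-integer exponents together with the successive common denominators), and (ii) Lemma \ref{prop:levels_Fourier_transform}(3), which computes $(\Ram, \Irr, L)$ of $F\cdot J$ from $(\Ram(J), \Irr(J), L(J))$ for any slope-$>1$ circle $J$ at infinity.

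First I would apply (i) to the two twists: $\Ram(T\cdot I) = \Ram(T'\cdot I) = \Ram(I) =: r$ and $L(T\cdot I) = L(T'\cdot I) = L(I)$. The equivalence hypothesis adds that $\Slope(T\cdot I) = \Slope(T'\cdot I)$; combined with the equality of ramification orders and the identity $\Slope = \Irr/\Ram$, this forces $\Irr(T\cdot I) = \Irr(T'\cdot I) =: s$.

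Finally, since the definition of a local elementary step demands $\Slope(T\cdot I) > 1$ (and similarly for $T'$), both circles $T\cdot I$ and $T'\cdot I$ fall squarely into case (3) of Lemma \ref{prop:levels_Fourier_transform}. Applying that formula to each yields
\[
\Ram(FT\cdot I) = \Ram(FT'\cdot I) = s-r, \qquad L(FT\cdot I) = \frac{r}{s-r}\,L(I) = L(FT'\cdot I),
\]
and hence also $\Slope(FT\cdot I) = s/(s-r) = \Slope(FT'\cdot I)$, which is the desired equality.

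There is no real obstacle: the statement is an immediate consequence of the invariance of $(\Ram, L)$ under unramified twists together with the fact that the transformation rule of Lemma \ref{prop:levels_Fourier_transform}(3) depends only on the triple $(\Ram, \Irr, L)$ of the circle being Fourier-transformed. The only point one should verify carefully when writing out the proof is the assertion that an unramified twist cannot alter $r$ or $L(I)$, which, as indicated above, is automatic from the explicit Puiseux-exponent characterization of the level datum recalled just before the lemma.
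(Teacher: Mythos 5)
Your proposal is correct and follows essentially the same route as the paper, which states that the claim "follows immediately from the transformation of level data under Fourier transform" (Lemma \ref{prop:levels_Fourier_transform}); you have simply spelled out the two inputs — invariance of $(\Ram,L)$ under unramified twists, and the fact that case (3) of that lemma determines $(\Ram,\Irr,L)$ of the Fourier transform from $(\Ram,\Irr,L)$ of the input — in full detail. No gaps.
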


We also have an analogue of prop. \ref{prop:unique_simplifying step}:

\begin{proposition}
\label{prop:unique_simplifying step_infinity}
Let $I$ be a twisted Stokes circle, such that its largest level is $>1$, then:

\begin{itemize}
\item There is at most one equivalence class of local elementary steps strictly reducing the ramification order of $I$. 
\item There is at most one equivalence class of local elementary steps preserving the ramification order of $I$. If it exists, it also preserves the level datum of $I$.
\end{itemize}
\end{proposition}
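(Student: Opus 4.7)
The plan is to closely follow the argument of Proposition \ref{prop:unique_simplifying step}, specialised to the restricted class of operations allowed at infinity. The two key simplifications compared to that proof are that no Möbius transformations enter the picture and the singularity remains at infinity throughout, so the Fourier transform can only be applied in case (3) of Lemma \ref{prop:levels_Fourier_transform}, while one must also track the constraint $\Slope(T \cdot I) > 1$. Because the hypothesis imposes $k_0 > 1$ on the largest level, the case analysis collapses considerably.

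To set things up, I would write $I = \cir{q}_\infty$ and decompose
\[
q = \lambda_1 z^{-n_1} + \dots + \lambda_p z^{-n_p} + \mu z^{-k_0} + \text{lower order terms},
\]
where $k_0 > 1$ is the largest level of $I$ and $n_1 > \dots > n_p > k_0$ are integers (with $p = 0$ allowed). An arbitrary twist $T$ at infinity yields $\widetilde{I} = T \cdot I = \cir{\widetilde q}_\infty$ with
\[
\widetilde q = \alpha_1 z^{-m_1} + \dots + \alpha_l z^{-m_l} + \mu' z^{-k_0} + \text{lower order terms},
\]
where $m_1 > \dots > m_l$ is any sequence of integers and $\alpha_i, \mu' \in \mathbb{C}^*$ are arbitrary. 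Twists preserve level data, so $\Ram(\widetilde I) = r := \Ram(I)$ and $\Slope(\widetilde I) = \max(m_1, k_0)$ (just $k_0$ when $l = 0$). Since $k_0 > 1$, the slope is automatically $>1$, so Lemma \ref{prop:levels_Fourier_transform}(3) applies and gives
\[
\Ram(F \cdot \widetilde I) = r \bigl(\Slope(\widetilde I) - 1\bigr).
\]

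I would then split on $k_0$, which is non-integer by definition of a level. If $k_0 > 2$, the slope is at least $k_0 > 2$ for every allowed twist — indeed any integer $m_1 > k_0 > 2$ is automatically $\geq 3$ — so the ramification strictly increases, and there is neither a reducing nor a preserving local elementary step. If $1 < k_0 < 2$, the slope takes value $k_0$ exactly when $l = 0$, value $2$ exactly when $l \geq 1$ and $m_1 = 2$, and is $\geq 3$ otherwise. By the displayed formula these three regimes correspond to strict decrease, preservation, and strict increase of the ramification respectively; each of the first two regimes is characterised by a single value of the slope (namely $k_0$ and $2$), hence forms exactly one equivalence class of local elementary steps, and both are realised for appropriate choices of twist.

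Finally, to obtain the claim that the ramification-preserving step preserves the level datum, I would substitute $s/r = 2$ into the formula $L(F \cdot \widetilde I) = \tfrac{r}{s-r} L(\widetilde I)$ of Lemma \ref{prop:levels_Fourier_transform}(3): the multiplier equals $1$, so $L(F \cdot \widetilde I) = L(\widetilde I) = L(I)$, the second equality holding by invariance of level data under twists. The main obstacle is essentially careful bookkeeping in the case split, in particular checking exhaustiveness of the three subcases when $1 < k_0 < 2$ and noting that an integer $m_1 > k_0 > 2$ is forced to be $\geq 3$; since the finite-distance branches and the Möbius freedom of Proposition \ref{prop:unique_simplifying step} are both absent in this restricted setting, no substantive new difficulty arises.
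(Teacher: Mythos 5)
Your proposal is correct and follows essentially the same route as the paper, which simply states that the proof is "completely similar" to that of Proposition \ref{prop:unique_simplifying step}: you specialise that case analysis to the branches $1<k_0<2$ and $k_0>2$ at infinity, correctly noting that the constraint $\Slope(T\cdot I)>1$ is automatic and that the equivalence class is determined by $\Slope(T\cdot I)\in\{k_0,2\}$. The reformulation $\Ram(F\cdot\widetilde I)=r(\Slope(\widetilde I)-1)$ is a clean way to package Lemma \ref{prop:levels_Fourier_transform}(3), and your verification that the ramification-preserving step has multiplier $\tfrac{r}{s-r}=1$ on level data matches the paper's claim.
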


\begin{proof}
The proof is completely similar to the one of \ref{prop:unique_simplifying step}.
\end{proof}

We say that a local elementary step strictly reducing the ramification order of $I$ is a local simplifying step for $I$. If such a step exists, there is a unique equivalence class of local simplifying steps, which we we call \emph{the} local simplifying step of $I$. 

If $I$ is a Stokes circle at infinity, and $FT$ is a local simplifying step for $I$, the level datum of $FT\cdot I$ only depends on the level datum $L(I)$, so we have a well-defined local simplification map
\[ S_{\infty}: \mathbb L\mapsto \mathbb L,
\]
where we set $S_\infty(\mathbf L)=\mathbf L$ when $\mathbf L$ is empty, or doesn't admit a local simplifying step.

Let $\mathbb L^{min}_{\infty}\subset \mathbb L$ be the set of level data that don't admit a local simplifying step, i.e. that are fixed points of $S_\infty$. We have the following characterization:

\begin{proposition}
\label{prop:characterization_non_simplifiable_factor_infinity}
Let $\mathbf L$ be a level datum. Then $\mathbf L\in \mathbb L^{\min}_{\infty}$ if and only if $\mathbf L$ is empty, or its highest level $k$ satisfies $k<1$ or $k>2$.
\end{proposition}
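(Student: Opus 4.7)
The plan is to adapt the case-by-case analysis from the proof of Prop.~\ref{prop:unique_simplifying step} to the restricted set of operations available in the local-at-infinity setting. Writing $I=\cir{q}$ with
\[
q = \lambda_1 z^{-n_1} + \cdots + \lambda_p z^{-n_p} + \mu z^{-k_0} + \text{lower order terms}
\]
for integers $n_1>\cdots>n_p>k_0$, a twist at infinity freely modifies the integer-exponent block of $q$, producing some $\widetilde I$ with $L(\widetilde I)=\mathbf L$; one can then apply $F$ and stay at infinity only if $\Slope(\widetilde I)>1$, as otherwise case (2) of Lemma~\ref{prop:levels_Fourier_transform} would send the result to finite distance. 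Since the highest level $k_0$ is non-integer by construction, I would split into the cases $\mathbf L=\emptyset$, $0<k_0<1$, $1<k_0<2$, and $k_0>2$, using Lemma~\ref{prop:levels_Fourier_transform}(3), which, for $\widetilde I$ of slope $m>1$ and ramification $r=\Ram(I)$, gives $\Ram(F\widetilde I)=r(m-1)$.

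The case $\mathbf L=\emptyset$ is vacuous. For $1<k_0<2$, the trivial twist already yields $\Slope(I)=k_0\in(1,2)$, hence $\Ram(FI)=r(k_0-1)<r$: a local simplifying step exists, so $\mathbf L\notin\mathbb L^{\min}_\infty$. For $k_0>2$, any admissible twist produces slope either $k_0>2$ (if no integer exponent exceeds $k_0$) or an integer $m_1\geq 3$ (since $m_1>k_0>2$), so $m>2$ and ramification strictly increases; thus $\mathbf L\in\mathbb L^{\min}_\infty$. For $0<k_0<1$, keeping the Fourier transform at infinity forces $\widetilde I$ to carry an integer-exponent term with largest exponent $m_1\geq 2$, so $m=m_1\geq 2$ and ramification never strictly decreases, placing $\mathbf L\in\mathbb L^{\min}_\infty$.

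The only genuinely new branch compared to Prop.~\ref{prop:characterization_minimal_level_data} is $k_0<1$: in the global setting, Prop.~\ref{prop:unique_simplifying step} provided a simplifying step there via case (2) of Lemma~\ref{prop:levels_Fourier_transform}, which sends a slope-$\leq 1$ circle at infinity to finite distance. Forbidding this motion is exactly what freezes $k_0<1$ level data in $\mathbb L^{\min}_\infty$, so the main point requiring care is ruling out that some clever twist at infinity could nonetheless restore a simplification; this reduces immediately to the slope inequality $m_1\geq 2$ above, making this the shortest of the three subcases once framed correctly.
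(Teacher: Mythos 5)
Your proof is correct and follows essentially the same route as the paper, which obtains this characterization from the case analysis of Prop.~\ref{prop:unique_simplifying step} restricted to twists at infinity followed by the Fourier transform (i.e.\ via Prop.~\ref{prop:unique_simplifying step_infinity}). One small slip: in the case $1<k_0<2$ the \emph{trivial} twist gives slope $k_0$ only when $q$ has no integer exponents above $k_0$; in general you must take the twist cancelling $\lambda_1 z^{-n_1}+\cdots+\lambda_p z^{-n_p}$, exactly as your own setup permits, and the conclusion is unchanged.
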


Again, we obtain a local simplification algorithm for Stokes circles at infinity simply by repeatedly applying $S_\infty$.  

\begin{definition} Let $\mathbf L\in \mathbb L$ be a level datum. The local simplification algorithm at infinity consists in, starting from $\mathbf L$, applying iteratively $S_\infty$ as long as a simplifying step exists (i.e.  $S_\infty$ reduces the ramification order), and stop as soon as there is no local simplifying step.
\end{definition}

If $\mathbf L$ is a level datum, let $\widehat{S}_\infty(\mathbf L)\in \mathbb L^{min}_\infty$ denote the level datum obtained when the algorithm terminates. This defines a local simplification map
\[
\widehat{S}_{\infty}:\mathbb L\to \mathbb L^{min}_\infty.
\]

Finally, as before, the local full simplification algorithm always gives us a unique level datum with minimal ramification order reachable by basic operations preserving at each step the location at infinity:

\begin{theorem}
\label{thm:algorithm_at_infinity}
Let $\mathbf L$ be a level datum. Then $\widehat{S}_\infty(\mathbf L)$ is an element with minimal ramification order in the orbit $\mathcal O_\infty\cdot \mathbf L$. Furthermore, this minimal element is unique.
\end{theorem}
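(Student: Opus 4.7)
The plan is to mimic closely the argument used in the proof of Theorem~\ref{thm:algorithm_gives_minimal_levels}, with Proposition~\ref{prop:unique_simplifying step_infinity} now playing the role of Proposition~\ref{prop:unique_simplifying step}. Let $I$ be a Stokes circle at infinity with level datum $\widehat{S}_\infty(\mathbf L)$, and let $B = B_m\cdots B_1$ be any composition of basic operations at infinity such that every intermediate Stokes circle lies in $\mathcal S_\infty$ and $\Ram(B\cdot I) \le \Ram(\widehat{S}_\infty(\mathbf L))$. Since the allowed operations are only twists at infinity and the Fourier transform, and two twists always compose to a twist, I can regroup $B$ into a sequence of local elementary steps $E_1, \dots, E_\ell$, stripping any trailing twists that do not affect the level datum and inserting trivial twists between two consecutive Fourier transforms when needed. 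Setting $I_j := E_j\cdots E_1 \cdot I$ and $r_j := \Ram(I_j)$, the objective becomes to prove $L(I_\ell) = L(I_0)$ by induction on $\ell$.

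The base case $\ell=1$ uses that $\widehat{S}_\infty(\mathbf L)\in \mathbb L^{min}_\infty$: no local simplifying step from $I_0$ exists, so $r_1 \ge r_0$; combined with $r_1 \le r_0$ this forces $r_1 = r_0$, and Proposition~\ref{prop:unique_simplifying step_infinity} then yields $L(I_1) = L(I_0)$. For the inductive step, if the sequence $r_0, \dots, r_\ell$ is constant, Proposition~\ref{prop:unique_simplifying step_infinity} applied iteratively gives the claim. Otherwise, since no simplifying step exists from $I_0$ we have $r_1 \ge r_0$, while $r_\ell \le r_0$, so the sequence attains its maximum strictly inside: there exist indices $i$ and $j$ with $j \ge i+2$ such that
\[
r_i < r_{i+1} = \cdots = r_{j-1} > r_j.
\]
Uniqueness of the rank-preserving equivalence class of local elementary steps gives $L(I_{i+1}) = \cdots = L(I_{j-1})$, and uniqueness of the local simplifying step from this common level datum then forces the forward descent $I_{j-1}\to I_j$ and the reverse of the rank-increasing step $I_i\to I_{i+1}$ to reach the same level datum, so $L(I_i) = L(I_j)$. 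Replacing $E_{i+1}, \dots, E_j$ by a single local elementary step realizing the transition $L(I_i)\to L(I_j)$ produces a shorter sequence with the same endpoint level datum, to which the induction hypothesis applies.

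The main subtlety is the reorganization of $B$ as a product of local elementary steps in a way compatible with the constraint that every Fourier transform is applied to a Stokes circle of slope $>1$; this is automatic because every intermediate Stokes circle is assumed to lie in $\mathcal S_\infty$, and by Lemma~\ref{prop:levels_Fourier_transform} the Fourier transform of a Stokes circle at infinity remains at infinity precisely when its slope exceeds $1$. Once this is in place, the uniqueness of the minimal element in $\mathcal O_\infty\cdot \mathbf L$ follows immediately: if $\mathbf L_1, \mathbf L_2 \in \mathcal O_\infty\cdot \mathbf L$ both have minimal ramification order, they both lie in $\mathbb L^{min}_\infty$, and applying the first part with $\mathbf L_2$ as starting point gives $\mathbf L_1 = \widehat{S}_\infty(\mathbf L_2) = \mathbf L_2$.
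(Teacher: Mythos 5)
Your proposal is correct and follows exactly the route the paper intends: the paper's own proof of Theorem~\ref{thm:algorithm_at_infinity} simply states that it is completely analogous to that of Theorem~\ref{thm:algorithm_gives_minimal_levels}, and your write-up is precisely that analogue, with Proposition~\ref{prop:unique_simplifying step_infinity} replacing Proposition~\ref{prop:unique_simplifying step} and the regrouping into local elementary steps handled correctly.
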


\begin{proof}
The proof is completely analogous to the one of theorem \ref{thm:algorithm_gives_minimal_levels}.
\end{proof}

\begin{example}
Consider $\mathbf L=\left\{\frac{4}{3},\frac{4}{9},\frac{1}{8}\right\}$. Its number of loops is $k=0$. The algorithm gives
\begin{align*}
\mathbf L=\left\{\frac{4}{3},\frac{4}{9},\frac{1}{8}\right\} = \left\{\frac{96}{72},\frac{32}{72},\frac{9}{72}\right\}
                & \overset{S_\infty}{\longmapsto} \left\{\frac{32}{24}, \frac{9}{24}\right\}=\left\{\frac{4}{3},\frac{3}{8}\right\}\\
                &\overset{S_\infty}{\longmapsto} \left\{\frac{9}{8}\right\}\\
                &\overset{S_\infty}{\longmapsto} \emptyset.        
\end{align*}

Consider $\mathbf L'=\left\{\frac{3}{2},\frac{4}{7},\frac{1}{4}\right\}$, with $k=2$ loops. The algorithm gives
\begin{align*}
\mathbf L'=\left\{\frac{3}{2},\frac{4}{7},\frac{1}{4}\right\}= \left\{\frac{42}{28},\frac{16}{28},\frac{7}{28}\right\}
                & \overset{S_\infty}{\longmapsto} \left\{\frac{16}{14}, \frac{7}{14}\right\}=\left\{\frac{8}{7},\frac{1}{2}\right\}\\
                &\overset{S_\infty}{\longmapsto} \left\{\frac{7}{2}\right\}.       
\end{align*}
\end{example}

\subsection{The forest of level data}

We can interpret the previous results in a nice way if we define a graph structure on the set $\mathbb L$ of level data, by linking by an edges level data related by a (local) simplifying step.

More precisely, let us to endow the set $\mathbb L$ with the structure of a graph, as follows: let $\Gamma^{\mathbb L}$ be the infinite graph with set of vertices $\mathbb L$, and such that $\mathbf L, \mathbf L'\in \mathbb L$, with $\mathbf L\neq \mathbf L'$, are linked by an edge in $\Gamma^{\mathbb L}$ if and only if $\mathbf L=S(\mathbf L')$, or $\mathbf L'=S(\mathbf L)$. 

Similarly, $\Gamma^{\mathbb L}_\infty$ be the infinite graph with set of vertices $\mathbb L$, and such that $\mathbf L, \mathbf L'\in \mathbb L$, with $\mathbf L\neq \mathbf L'$, are linked by an edge in $\Gamma^{\mathbb L}_\infty$ if and only if $\mathbf L=S_\infty(\mathbf L')$, or $\mathbf L'=S_\infty(\mathbf L)$.

\begin{theorem} The graph $\Gamma^{\mathbb L}$ (respectively $\Gamma^{\mathbb L}_\infty$) has the following properties:
\begin{itemize}
\item It is acyclic, i.e. is a forest.
\item Its connected components, i.e. its trees, are the orbits $\mathcal O\cdot \mathbf L$ (respectively $\mathcal O_\infty\cdot \mathbf L$). In particular each connected component contains exactly one element of $\mathbb L^{min}$ (respectively $\mathbb L^{min}_\infty$).
\end{itemize}
\end{theorem}

\begin{proof}
For the first assertion, by construction of $\Gamma^{\mathbb L}$ (resp. $\Gamma^{\mathbb L}_\infty$) there are no loops, nor cycles of length 2 (double edges). If $c=(\mathbf L_1, \dots, \mathbf L_m)$ is a cycle in $\Gamma^{\mathbb L}$ (resp. $\Gamma^{\mathbb L}_\infty$) of length $m\geq 3$, let $\mathbf L_k$ be an element with maximal ramification order in $c$. Then either we have $\Ram(\mathbf L_k)>\Ram(\mathbf L_{k-1})$ and $\Ram(\mathbf L_k)>\Ram(\mathbf L_{k+1})$ (we consider indices modulo $m$), or we have $\Ram(\mathbf L_k)=\Ram(\mathbf L_{k+1})$ or $\Ram(\mathbf L_k)=\Ram(\mathbf L_{k-1})$. All cases are forbidden by Prop. \label{prop:unique_simplifying step} (resp. Prop. \ref{prop:unique_simplifying step_infinity}), so we have a contradiction. The second assertion follows then immediately from Thm. \ref{thm:algorithm_gives_minimal_levels} (resp. Thm \ref{thm:algorithm_at_infinity}). 
\end{proof}

If we view each element of $\mathbb L^{min}$ (respectively $\mathbb L^{min}_\infty$) as the root of the tree constituted by its connected component in the graph, we obtain a partition of the set $\mathbb L$ of level data as an infinite union of disjoint rooted infinite trees, and simplification map $S$ (respectively $S^\infty$) amounts to taking one step towards the root, and the full simplification map $\widehat{S}$ (resp. $\widehat{S}_\infty$) sends any vertex to the root of the tree it belongs to. 

Furthermore, any vertex $\mathbf L$ has an infinite number of children in both graphs, since (even without using Möbius transformations) there is an infinite number of ways of increasing the rank of a Stokes circle by applying a twist followed by taking the Fourier transform.\\

\section{Finiteness of minimal level data at infinity}
\label{section:simplification_infinity}

Let $I$ be a twisted Stokes circle at infinity, with level datum $\mathbf L$. Let $r$ be the ramification order of $I$, and let us write  $\mathbf L=\left\{\frac{s_0}{r},\dots,\frac{s_p}{r}\right\}$, with $s=s_0>\dots>s_p$. Then the diagram associated by \cite{doucot2021diagrams} to any connection with irregular class $I$ consists of one vertex, with a number of loops equal to $\frac{B_\mathbf{I}}{2}$, where $B_I$ is given by the formula

\begin{equation}
B_I=B_\mathbf{L}=(r-(s_0,r))s_0+\dots+((s_0,\dots,s_{p-1},r)-(s_0,\dots,s_{p},r))s_p-r^2+1,
\end{equation}
where $(\cdot, \dots, \cdot)$ denotes the greatest common divisor. One can indeed show that $B_\mathbf{L}$ is always an even integer. Furthermore, the expected dimension of the elementary wild character variety $\mathcal M_B(\mathbf L)$ is given by 
\begin{equation}
\dim \mathcal M_B(\mathbf L)=B_\mathbf{L}.
\end{equation}

\begin{lemma}
Let $\mathbf L$ be a nonempty level datum, and $k$ its largest level. If $k<1$, then $B_{\mathbf L}<0$.
\end{lemma}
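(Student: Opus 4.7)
My plan is to bound $B_{\mathbf{L}}$ directly from the explicit formula by telescoping the greatest common divisor differences.

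First I would set up notation matching the formula: write $d_{-1} := r$ and $d_i := (s_0, \ldots, s_i, r)$ for $i = 0, \ldots, p$, so that
\[
B_{\mathbf{L}} \;=\; \sum_{i=0}^{p} (d_{i-1} - d_i)\, s_i \;-\; r^2 + 1.
\]
Since each $d_i$ divides $d_{i-1}$, the differences $d_{i-1}-d_i$ are nonnegative. The key observation is that $s_i \leq s_0$ for every $i$ (as the $s_i$ are strictly decreasing), so the sum telescopes:
\[
\sum_{i=0}^{p} (d_{i-1} - d_i)\, s_i \;\leq\; s_0 \sum_{i=0}^{p} (d_{i-1} - d_i) \;=\; s_0 (r - d_p) \;\leq\; s_0\, r.
\]
Hence $B_{\mathbf{L}} \leq s_0 r - r^2 + 1 = r(s_0 - r) + 1$.

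Now I invoke the hypothesis $k = s_0/r < 1$, which gives $s_0 < r$, i.e.\ $s_0 \leq r-1$. In particular $r(s_0 - r) \leq -r$, so $B_{\mathbf{L}} \leq -r + 1$. It remains only to observe that $r \geq 2$: indeed, by the definition of a level, $s_0/r$ is a non-integer positive rational, and $s_0 < r$ together with $s_0 \geq 1$ rules out $r = 1$. Therefore $B_{\mathbf{L}} \leq -1 < 0$, as required.

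There is essentially no obstacle here: the argument is a one-line telescoping bound followed by using $s_0 \leq r-1$ and $r \geq 2$. The only subtle point to double-check is that $r \geq 2$, which comes from the convention (visible in the characterization of level data recalled earlier) that levels are not integers, so $s_0/r \notin \mathbb{Z}$ forbids $r = 1$.
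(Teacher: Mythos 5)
Your proof is correct and follows essentially the same route as the paper: bound each $s_i$ by $s_0$, telescope the nonnegative gcd differences to get $B_{\mathbf L}\leq s_0 r - r^2+1$, then use $s_0\leq r-1$ and $r\geq 2$ to conclude $B_{\mathbf L}\leq -r+1\leq -1$. Your explicit justification of $r\geq 2$ is a point the paper leaves implicit, but it is the same argument.
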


\begin{proof}
We have 
\begin{align*}
B_{\mathbf L}&=(r-(s_0,r))s_0+\dots+((s_0,\dots,s_{p-1},r)-(s_0,\dots,s_{p},r))s_p-r^2+1\\
&\leq  (r-(s_0,r))s_0+\dots+((s_0,\dots,s_{p-1},r)-(s_0,\dots,s_{k},r))s_0-r^2+1\\
&\leq sr-r^2+1\\
&\leq -r +1\\
&\leq -1,
\end{align*}
where to pass to the second line we use that $s_i<s_0$ for $i=1,\dots, p$, and to pass from the third to the fourth line we use that $s<r$ since $k=\frac{s}{r}<1$.
\end{proof}

\begin{theorem}
\label{thm:finite_number_minimal_factors}
For any integer $n\in \mathbb Z_{\geq 0}$, the number of minimal level data $\mathbf L\in \mathbb L^{min}_\infty$ such that $B_{\mathbf L}=2n$ is finite.
\end{theorem}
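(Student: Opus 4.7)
The plan is to show that the constraints $\mathbf L \in \mathbb L^{min}_\infty$ and $B_{\mathbf L} = 2n$ force the ramification order $r$, the number of levels $p+1$, and every numerator $s_i$ to be bounded in terms of $n$, from which finiteness is immediate. By Proposition \ref{prop:characterization_non_simplifiable_factor_infinity}, any nonempty $\mathbf L \in \mathbb L^{min}_\infty$ satisfies $\max(\mathbf L) > 2$ or $\max(\mathbf L) < 1$, and the preceding lemma rules out the second option as soon as $B_{\mathbf L} \geq 0$. So it suffices to bound $\mathbf L = \{s_0/r > \dots > s_p/r\}$ under the assumption $s_0/r > 2$. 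Since the top level is a non-integer exponent of $q$, we have $r \nmid s_0$, hence $s_0 \geq 2r+1$, and $d_0 := \gcd(s_0, r)$ is a proper divisor of $r$, giving $d_0 \leq r/2$.

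The main step is to bound $r$. Setting $d_{-1} := r$ and $d_i := \gcd(s_0, \ldots, s_i, r)$, the formula for $B_{\mathbf L}$ in the statement rearranges as
\[
B_{\mathbf L} + r^2 - 1 = \sum_{i=0}^{p}(d_{i-1} - d_i)\, s_i,
\]
and the characterization of level data recalled in Section \ref{section:reminders} forces $d_{-1} > d_0 > \cdots > d_p \geq 1$ to be a strictly decreasing sequence of positive integers. In particular every factor $d_{i-1} - d_i$ is $\geq 1$ and every summand is a positive integer. Keeping only the $i = 0$ contribution yields
\[
B_{\mathbf L} + r^2 - 1 \;\geq\; (r - d_0)\, s_0 \;\geq\; \tfrac{r}{2}(2r+1) \;=\; r^2 + \tfrac{r}{2},
\]
whence $r \leq 2(B_{\mathbf L} - 1) = 4n - 2$; in particular the case $n = 0$ forces $\mathbf L = \emptyset$.

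With $r$ bounded, the remaining bounds are immediate. Since every summand is $\geq 1$, the number of summands satisfies $p + 1 \leq B_{\mathbf L} + r^2 - 1$, and isolating the $i$-th summand together with $d_{i-1} - d_i \geq 1$ gives $s_i \leq B_{\mathbf L} + r^2 - 1$. The tuple $(r, s_0, \ldots, s_p)$ determines $\mathbf L$ and each entry lies in a finite range depending only on $n$, so the set of admissible $\mathbf L$ is finite. The main point to verify carefully is the strict decrease $d_0 < r$ (and thus $d_0 \leq r/2$), which follows directly from the characterization of the level datum in terms of the Puiseux exponents of $q$ stated in Section \ref{section:reminders}; everything else is a bookkeeping exercise with the explicit formula for $B_{\mathbf L}$.
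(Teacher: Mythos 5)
Your proof is correct, and it takes a genuinely different route from the paper. The paper first applies the Fourier transform to replace $\mathbf L$ (whose top level exceeds $2$) by $\mathbf L'$ with top level in $(1,2)$, using invariance of $B$ under Fourier transform, and then runs a case analysis on $l$ and $\rho$ after writing $s=r+ld$, $r=\rho d$; the delicate points there are excluding $l=1$ and $\rho\in\{1,2\}$. You instead work directly with $\mathbf L$ itself: from $s_0/r>2$ you get $s_0\geq 2r+1$, and from the non-integrality of the top level you get that $d_0=\gcd(s_0,r)$ is a proper divisor of $r$, hence $d_0\leq r/2$; feeding these into the single term $(r-d_0)s_0$ of the sum $B_{\mathbf L}+r^2-1=\sum_i(d_{i-1}-d_i)s_i$ kills the $r^2$ and yields $r\leq 4n-2$ in one line. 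The remaining bounds on $p$ and the $s_i$ follow from positivity of the summands, which in turn rests on the strict decrease $r>d_0>\cdots>d_p$; you correctly identify this as the point needing justification, and it does follow from the characterization of level data via strictly increasing common denominators recalled in Section \ref{section:reminders}. Your argument is shorter and more elementary (no Fourier transform needed), and it gives an explicit bound $\Ram(\mathbf L)\leq 4n-2$ directly on $\mathbf L$, which is just as usable for the computer search as the paper's Lemma \ref{lemma:bound_ramification}; what the paper's route buys is the notation and intermediate facts ($d\geq 2$ when $p\geq 1$, the formula $B_{\mathbf L'}=(r-1)(s-r-1)$ in the one-level case) that are reused verbatim in the proof of Proposition \ref{prop:unique_non_simplifiable_small_dimension}.
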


\begin{proof}
Let $n\geq 0$ and integer, and $\mathbf L\in \mathbb L^{min}_\infty$ such that $B_{\mathbf L}=2n$. By the previous lemma, the largest level $k$ of $\mathbf L$ satisfies $k>1$. Since $\mathbf L$ is not simplifiable, by Prop. \ref{prop:characterization_non_simplifiable_factor_infinity} this implies $k>2$. 
Instead of reasoning with $\mathbf L$, it will be slightly more convenient to work with its Fourier transform: if $I$ is a Stokes circle with level datum $\mathbf L$ and slope $k$, then $L':=L(F\cdot I)$ only depends on $L$. By invariance of the diagram under Fourier transform \cite{doucot2021diagrams}, it satisfies $B_{\mathbf L'}=B_{\mathbf L}=2n$, and its largest level $k'$ satisfies $1<k'<2$. 

Let $r$ be the ramification order of $\mathbf L'$, and let us write $k'=\frac{s}{r}$, and $\mathbf L'=\left\{\frac{s_0}{r},\dots,\frac{s_p}{r}\right\}$, with $s=s_0>\dots>s_p$.

We have 
\begin{align*}
2n=B_{\mathbf L}&=(r-(s,r))s+\dots+((s_0,\dots,s_{p-1},r)-(s_0,\dots,s_{p},r))s_p-r^2+1\\
&\geq (r-(s,r))s-r^2+1=:m(s,r).
\end{align*}
It will be sufficient to prove that the condition $m(s,r)\leq 2n$ leaves a finite number of possibilities for the pair $(s,r)$, since for each such possibility there is a finite number of possible sequences $s_0>\dots >s_p$. 

Let us set $d:=(s,r)\geq 1$, and write $s=\sigma d$, $r=\rho d$, with $\sigma, \rho$ coprime. We notice that $s,r,d$ satisfy some inequalities. First, since $\frac{s}{r}<2$, we have $s<2r$, and in turn (since both $s$ and $r$ are multiples of $d$) we have
\[ s\leq 2r-d.
\]
Second, since $\frac{s}{r}<2$, we have $s>r$ so $s\geq r+d$. But we have better: $s=r+d$ is excluded, indeed in that case we would have $r=\rho d$, $s=(\rho+1)d$, so $k'=\frac{s}{r}=\frac{\rho+1}{\rho}$, which by the formula for the slopes of the Fourier transform implies $k=\rho$, and we have a contradiction since $k$ is a level of $\mathbf L$ hence cannot be an integer. So we have $s>r+d$, hence
\[
s\geq r+2d.
\]
This implies that $s$ can be written $s=r+ld$, with $2\leq l\leq \rho-1$.
Let us use this to derive from the inequality $m(s,r)\leq 2n$ a simpler inequality satisfied by $s,r,d$. 
\begin{align*}
m(s,r)&=(r-d)s-r^2+1\\
&= (r-d)(r+ld)-r^2+1\\
&=(l-1)rd -ld^2+1\\
&=\left((l-1)\rho -l\right) d^2+1
\end{align*}
We have $0<l<\rho$ so $(l-1)\rho-l\geq (l-2)\rho$. Hence we obtain the inequality
\[
2n \geq m(s,r)\geq (l-2)\rho d^2+1> (l-2)rd.
\]
If $l-2>0$, this implies $rd<2n$, hence $r<2n$, which leaves a finite number of possibilities for $r$, and in turn for $s$. 

Therefore (since $l\geq 2$) it only remains to consider the case  $l=2$, that is $s=r+2d$. In that case we have
\[
2n\geq m(s,r)=(\rho-2)d^2+1.
\]
If $\rho>2$, this leaves a finite number of possibilities for the pair $(\rho,d)$, and we are done.

But clearly one cannot have $\rho=1$, since this would mean $r=d$, and $r$ divides $s$, which is not possible since the level $k'=\frac{s}{r}$ cannot be an integer. The case $\rho=2$ is also impossible. Indeed, this means $r=2d$, and $s=r+2d=4d$, but then $s=2r$ and we have a contradiction. This concludes the proof.
\end{proof}

To concretely implement a computer search of minimal level data, it is convenient to notice that we have a simple bound satisfied by $r$ in all cases:

\begin{lemma}
\label{lemma:bound_ramification}
Let $\mathbf L\in \mathbb L^{min}_\infty$ be a nonempty minimal level datum at infinity such that $B_{\mathbf L}=2n \geq 0$. Let $k>2$ be its largest level, and let us write $k=\frac{s}{s-r}$ with $r-s$ the ramification order of $\mathbf L$, and $r<s<2r$. Then the integer $r$ satisfies the inequality
\[ r< 6n.
\]
\end{lemma}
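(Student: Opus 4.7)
The plan is to apply the Fourier transform and then read off the bound from the inequality already established inside the proof of Theorem~\ref{thm:finite_number_minimal_factors}, rather than redoing the arithmetic from scratch.

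Concretely, take any Stokes circle $I$ at infinity with level datum $\mathbf{L}$, ramification order $s-r$ and irregularity $s$. By Lemma~\ref{prop:levels_Fourier_transform}(3), $F\cdot I$ is again at infinity, with ramification order $r$, irregularity $s$, slope $s/r\in(1,2)$, and level datum $\mathbf{L}':=L(F\cdot I)=\frac{s-r}{r}\mathbf{L}$. Since the diagram is invariant under the Fourier transform, $B_{\mathbf{L}'}=B_{\mathbf{L}}=2n$, so $\mathbf{L}'$ is exactly of the form considered in the proof of Theorem~\ref{thm:finite_number_minimal_factors}. In particular, the inequality
\[
2n\geq m(s,r):=(r-(s,r))s-r^2+1
\]
holds. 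Setting $d=(s,r)$, $\rho=r/d$, $\sigma=s/d$, and writing $s=r+ld$, the argument in that proof already gives $l\geq 2$ and $\rho\geq 3$, and rewrites $m(s,r)=((l-1)\rho-l)d^2+1$, so it remains to extract $r=\rho d<6n$ from $((l-1)\rho-l)d^2\leq 2n-1$.

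I would then split into three cases. If $l\geq 3$, the inequality $(l-1)\rho-l\geq \rho$ (which holds for $\rho\geq 3$) yields $r\leq \rho d^2\leq 2n-1$. If $l=2$ and $\rho\geq 4$, then $\rho-2\geq \rho/2$ gives $r\leq \rho d^2\leq 4n-2$. The only slightly delicate case is $l=2$, $\rho=3$, where one only gets $d^2\leq 2n-1$, so the naive bound $r\leq \rho d^2$ is too weak; here I would use $r=3d\leq 3\sqrt{2n-1}$ directly, and close via the elementary inequality $3\sqrt{2n-1}<6n$ (equivalent to $4n^2-2n+1>0$, valid for all $n\geq 1$). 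The case $n=0$ does not occur, since $m(s,r)\geq 1$ would contradict $m(s,r)\leq 0$. The main obstacle is precisely this last subcase: it is what forces the numerical factor $6$ in the bound, and the only reason one cannot simply write $r<4n$.
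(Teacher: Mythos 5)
Your proof is correct and follows essentially the same route as the paper: both reuse the inequality $2n\geq m(s,r)$ and the parametrization $s=r+ld$, $r=\rho d$ with $l\geq 2$, $\rho\geq l+1$ from the proof of Theorem~\ref{thm:finite_number_minimal_factors}, and then case-split on $l$. The only cosmetic difference is in the case $l=2$: the paper treats it uniformly via $d\leq r/3$, giving $2n\geq(r-2d)d+1>r/3$, whereas you split off the subcase $\rho=3$ and close it with the bound $r=3d\leq 3\sqrt{2n-1}<6n$; both computations are valid.
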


\begin{proof}
Keeping the notations of the previous proof, in the case $l>2$, we have $r<2n$, so the inequality $r<6n$ is also satisfied. In the case $l=2$, we showed that $\rho\geq 3$, i.e. $r\geq 3d$,or $d\leq \frac{r}{3}$. In turn we have 
\[
2n\geq m(s,r)=(r-2d)d+1\geq \frac{r}{3} d+1> \frac{r}{3},
\]
so $r<6n$.
\end{proof}

For small dimensions $2n=0,2,4$, it turns out that there is actually a unique minimal level datum:

\begin{proposition}
\label{prop:unique_non_simplifiable_small_dimension}
 Let $\mathbf L\in \mathbb L^{min}_\infty$. Then
\begin{itemize}
\item $B_{\mathbf L}=0$ if and only if $\mathbf L=\emptyset$.
\item $B_{\mathbf L}=2$ if and only if $\mathbf L=\left\{\frac{5}{2}\right\}$.
\item $B_{\mathbf L}=4$ if and only if $\mathbf L=\left\{\frac{7}{2}\right\}$.
\end{itemize}
\end{proposition}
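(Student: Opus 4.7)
My plan is as follows. First, the ``if'' direction in each case is a direct computation of $B_{\mathbf L}$ using the formula preceding the proposition: one checks $B_{\emptyset}=0$, $B_{\{5/2\}}=(2-1)\cdot 5-4+1=2$, $B_{\{7/2\}}=(2-1)\cdot 7-4+1=4$, and notes that all three level data lie in $\mathbb L^{min}_\infty$ (being either empty or having largest level $>2$).

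For the uniqueness (``only if'') direction, I would mimic the strategy used in the proof of Theorem \ref{thm:finite_number_minimal_factors}. Suppose $\mathbf L\in \mathbb L_\infty^{min}$ is nonempty with $B_{\mathbf L}=2n$, $n\in \{0,1,2\}$. The lemma preceding Theorem \ref{thm:finite_number_minimal_factors} forbids a largest level $<1$, so by Proposition \ref{prop:characterization_non_simplifiable_factor_infinity} the largest level $k$ of $\mathbf L$ satisfies $k>2$. Pick a Stokes circle $I$ at infinity realizing $\mathbf L$ with $\Slope(I)=k$, and pass to $\mathbf L':=L(F\cdot I)$, whose largest level $k'$ lies in $(1,2)$ and which still satisfies $B_{\mathbf L'}=B_{\mathbf L}=2n$ by Fourier invariance of the diagram.

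Writing $\mathbf L'=\{s_0/r>\dots>s_p/r\}$ with $s:=s_0$, $d:=\gcd(s,r)$, $\rho:=r/d$, the argument from the proof of Theorem \ref{thm:finite_number_minimal_factors} gives $\rho\geq 3$, $s=r+ld$ with $2\leq l\leq \rho-1$ and $\gcd(\rho,l)=1$, as well as
\[
2n \;=\; B_{\mathbf L'} \;\geq\; m(s,r) \;=\; \bigl((l-1)\rho - l\bigr)d^2 + 1.
\]
Enumeration is then direct. Since $(l-1)\rho - l \geq \rho - 2 \geq 1$, one always has $m(s,r)\geq 2$, which immediately excludes $n=0$ and forces $\mathbf L=\emptyset$. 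For $n=1$, the constraint $((l-1)\rho - l)d^2\leq 1$ admits only $(l,\rho,d)=(2,3,1)$, hence $(s,r)=(5,3)$. For $n=2$, the constraint $((l-1)\rho - l)d^2\leq 3$ with $\gcd(\rho,l)=1$, $\rho\geq 3$ admits only $(l,\rho,d)\in\{(2,3,1),(2,5,1)\}$; the first gives $B_{\mathbf L'}=2\neq 4$ and is discarded, the second yields $(s,r)=(7,5)$.

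The step that will require the most care is verifying that in each remaining case $\mathbf L'$ must be single-level: since $d=1$, the denominator of the largest level $s/r$ already equals the full ramification $r$, so by the strictly increasing denominator condition in a level datum no further levels can be appended without enlarging $r$. Hence $\mathbf L'=\{5/3\}$ for $n=1$ and $\mathbf L'=\{7/5\}$ for $n=2$. Finally, inverting the Fourier transform via Lemma \ref{prop:levels_Fourier_transform}(3), which gives $\Ram(I)=\Irr(F\cdot I)-\Ram(F\cdot I)$ and $L(I)=\frac{\Ram(F\cdot I)}{\Ram(I)}\,L(F\cdot I)$, recovers $\mathbf L=\{5/2\}$ and $\mathbf L=\{7/2\}$ respectively, closing the argument.
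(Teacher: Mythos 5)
Your argument is correct and follows essentially the same route as the paper's proof: reduce to the Fourier transform $\mathbf L'$ with largest level in $(1,2)$, use the bound $B_{\mathbf L'}\geq m(s,r)=((l-1)\rho-l)d^2+1$ from the finiteness proof to force $d=1$ (hence a single level) and enumerate the few remaining pairs $(s,r)$, then invert the Fourier transform. The paper merely orders the steps differently, first deducing single-levelness from $p\geq 2\Rightarrow d\geq 2\Rightarrow B_{\mathbf L'}\geq 5$ and then enumerating via $B_{\mathbf L'}=(r-1)(s-r-1)$, but the content is the same.
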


\begin{proof}

We keep all the notations from the proof of \ref{thm:finite_number_minimal_factors}. 
Let us assume that $B_\mathbf L\in \{0, 2, 4\}$. We first show that $\mathbf L$ (or equivalently $\mathbf L'$) only has one level. Indeed, if the number of levels of $\mathbf L'$ is $p\geq 2$, then we have $(s,r)=d\geq 2$, and from the proof of \ref{thm:finite_number_minimal_factors}, in all cases we have the inequality
\[
B_{\mathbf L'}\geq d^2+1\geq 5,
\]
so we have a contradiction. 
So $\mathbf L'$ is either empty, or only has one level, i.e. $\mathbf L'=\left\{\frac{s}{r}\right\}$, with $s, r$ coprime, $2\leq r<s<2r$, and we have:

\[
B_{\mathbf L'}=(r-1)(s-r-1)\geq r-1,
\]
and in particular $r\leq B_{\mathbf L'}+1$. In turn $\mathbf B_{\mathbf L'}=0$ is impossible if $\mathbf L'\neq \emptyset$. If $B_{\mathbf L'}=2$, then $r\leq 3$, and it is easily seen that the only possibility for $(r,s)$ is $\mathbf L'=\left\{\frac{5}{3}\right\}$, which corresponds to $\mathbf L=\left\{\frac{5}{2}\right\}$. Finally, of $B_{\mathbf L'}=4$, then $r\leq 5$, and the only possibility for $(r,s)$ is $\mathbf L'=\left\{\frac{7}{5}\right\}$, which corresponds to $\mathbf L=\left\{\frac{7}{2}\right\}$.
\end{proof}

For higher dimensions, there are several minimal level data, which we conjecture correspond to non-isomorphic wild character varieties. Using our bound from lemma \ref{lemma:bound_ramification}, we can do a computer search to find all minimal level data for any number of loops. The minimal level data for up to ten loops are written down in table \ref{fig:table_non-simplifiable_levels}. We notice that the sequence of the first numbers of minimal level data at infinity doesn't seem to match with any known integer sequence listed on \href{https://oeis.org/}{oeis.org}.

\newpage

\begin{center}
\begin{figure}[h]
\begin{tabular}{|c|c|c|}
  \hline
  Number of loops &  Minimal level data & Number of minimal level data \\
  \hline
  0 & $\emptyset$ & 1 \\
  \hline
  1 & $\left\{\frac{5}{2}\right\}$ & 1\\
  \hline
  2 & $\left\{\frac{7}{2}\right\}$ & 1 \\
  \hline
  3 & $\begin{array}{c}
       \left\{\frac{9}{2}\right\}\\
       \left\{\frac{7}{3}\right\}\\
       \left\{\frac{5}{2},\frac{1}{4}\right\}
      \end{array}$
      & 3 \\
      \hline
  4 & $\begin{array}{c}
       \left\{\frac{11}{2}\right\}\\
       \left\{\frac{8}{3}\right\}\\
       \left\{\frac{5}{2},\frac{3}{4}\right\}
      \end{array}$
      & 3 \\
      \hline
   5 & $\begin{array}{c}
       \left\{\frac{13}{2}\right\}\\
       \left\{\frac{5}{2},\frac{5}{4}\right\}
      \end{array}$
      & 2 \\
      \hline
 6 & $\begin{array}{c}
       \left\{\frac{15}{2}\right\}\\
       \left\{\frac{10}{3}\right\}\\
       \left\{\frac{9}{4}\right\}\\
       \left\{\frac{5}{2},\frac{1}{6}\right\}\\
       \left\{\frac{5}{2},\frac{7}{4}\right\}
      \end{array}$
      & 5 \\
      \hline   
 7 & $\begin{array}{c}
       \left\{\frac{17}{2}\right\}\\
       \left\{\frac{11}{3}\right\}\\
       \left\{\frac{7}{2}, \frac{1}{4}\right\}\\
       \left\{\frac{5}{2},\frac{1}{3}\right\}\\
       \left\{\frac{5}{2},\frac{9}{4}\right\}
      \end{array}$
      & 5 \\
      \hline
 8 & $\begin{array}{c}
       \left\{\frac{19}{2}\right\}\\
       \left\{\frac{7}{2}, \frac{3}{4}\right\}\\
       \end{array}$
      & 2 \\
      \hline  
 9 & $\begin{array}{c}
       \left\{\frac{21}{2}\right\}\\
       \left\{\frac{13}{3}\right\}\\
       \left\{\frac{11}{4}\right\}\\
       \left\{\frac{7}{2},\frac{5}{4}\right\}\\
       \left\{\frac{5}{2},\frac{2}{3}\right\}
      \end{array}$
      & 5 \\
      \hline 
 10 & $\begin{array}{c}
       \left\{\frac{23}{2}\right\}\\
       \left\{\frac{14}{3}\right\}\\
       \left\{\frac{11}{5}\right\}\\
       \left\{\frac{7}{2},\frac{7}{4}\right\}\\
       \left\{\frac{5}{2},\frac{1}{8}\right\}\\
       \left\{\frac{5}{2},\frac{5}{6}\right\}
      \end{array}$
      & 6 \\
      \hline

  \hline
\end{tabular}
\caption{Table of minimal level data at infinity, corresponding to one-vertex diagrams with up to 10 loops.}
\label{fig:table_non-simplifiable_levels}
\end{figure}
\end{center}


\begin{thebibliography}{10}

\bibitem{arinkin2010rigid}
D.~Arinkin.
\newblock Rigid irregular connections on $\mathbb{P}^1$.
\newblock {\em Compositio Mathematica}, 146(5):1323--1338, 2010.

\bibitem{balser1981reduction}
W.~Balser, W.~B. Jurkat, and D.~A. Lutz.
\newblock On the reduction of connection problems for differential equations
  with an irregular singular point to ones with only regular singularities.
  {I}.
\newblock {\em SIAM J. Math. Anal.}, 12(5):691--721, 1981.

\bibitem{boalch2008irregular}
P.~Boalch.
\newblock Irregular connections and {K}ac-{M}oody root systems.
\newblock {\em arXiv preprint arXiv:0806.1050}, 2008.

\bibitem{boalch2012simply}
P.~Boalch.
\newblock Simply-laced isomonodromy systems.
\newblock {\em Publications math{\'e}matiques de l'IH{\'E}S}, 116(1):1--68,
  2012.

\bibitem{boalch2014geometry}
P.~Boalch.
\newblock Geometry and braiding of {S}tokes data; fission and wild character
  varieties.
\newblock {\em Annals of Mathematics}, pages 301--365, 2014.

\bibitem{boalch2016global}
P.~Boalch.
\newblock Global {W}eyl groups and a new theory of multiplicative quiver
  varieties.
\newblock {\em Geometry \& Topology}, 19(6):3467--3536, 2016.

\bibitem{boalch2018wild}
P.~Boalch.
\newblock Wild character varieties, meromorphic {H}itchin systems and {D}ynkin
  diagrams.
\newblock {\em Geometry and Physics: Volume 2: A Festschrift in Honour of Nigel
  Hitchin}, 2:433, 2018.

\bibitem{boalch2024counting}
P.~Boalch.
\newblock Counting the fission trees and nonabelian {H}odge graphs.
\newblock {\em arXiv preprint arXiv:2410.23358}, 2024.

\bibitem{boalch2022twisted}
P.~Boalch, J.~Dou{\c{c}}ot, and G.~Rembado.
\newblock Twisted local wild mapping class groups: configuration spaces,
  fission trees and complex braids.
\newblock {\em To appear in the Publications of the Research Institute for
  Mathematical Sciences}, 2022.

\bibitem{boalch2020diagrams}
P.~Boalch and D.~Yamakawa.
\newblock Diagrams for nonabelian {H}odge spaces on the affine line.
\newblock {\em Comptes Rendus. Math{\'e}matique}, 358(1):59--65, 2020.

\bibitem{boalch2021topology}
P.~P. Boalch.
\newblock Topology of the {S}tokes phenomenon.
\newblock In {\em Integrability, {Q}uantization, and {G}eometry. {I}}, volume
  103 of {\em Proc. Sympos. Pure Math.}, pages 55--100. Amer. Math. Soc., 2021.
\newblock arXiv:1903.12612.

\bibitem{crawley2001geometry}
W.~Crawley-Boevey.
\newblock Geometry of the moment map for representations of quivers.
\newblock {\em Compositio Mathematica}, 126(3):257--293, 2001.

\bibitem{deligne2006letter}
P.~Deligne.
\newblock Letter to {N}. {K}atz.
\newblock 2006.

\bibitem{doucot2021diagrams}
J.~Dou{\c{c}}ot.
\newblock Diagrams and irregular connections on the {R}iemann sphere.
\newblock {\em arXiv preprint arXiv:2107.02516}, 2021.

\bibitem{doucot2024basic}
J.~Dou{\c{c}}ot.
\newblock Basic representations of genus zero nonabelian {H}odge spaces.
\newblock {\em arXiv preprint arxiv 2409.12864}, 2024.

\bibitem{doucot2024topological}
J.~Dou{\c{c}}ot and A.~Hohl.
\newblock A topological algorithm for the {F}ourier transform of {S}tokes data
  at infinity.
\newblock {\em arXiv preprint arXiv:2402.05108}, 2024.

\bibitem{doucot2023topology}
J.~Dou{\c{c}}ot and G.~Rembado.
\newblock Topology of irregular isomonodromy times on a fixed pointed curve.
\newblock {\em Transformation Groups}, pages 1--41, 2023.

\bibitem{doucot2022local}
J.~Dou{\c{c}}ot, G.~Rembado, and M.~Tamiozzo.
\newblock Local wild mapping class groups and cabled braids.
\newblock {\em arXiv preprint arXiv:2204.08188}, 2022.

\bibitem{doucot2024moduli}
J.~Dou{\c{c}}ot, G.~Rembado, and M.~Tamiozzo.
\newblock Moduli spaces of untwisted wild {R}iemann surfaces.
\newblock {\em arXiv preprint 2403.18505}, 2024.

\bibitem{dagnolo2020topological}
A.~D’Agnolo, M.~Hien, G.~Morando, and C.~Sabbah.
\newblock Topological computation of some {S}tokes phenomena on the affine
  line.
\newblock In {\em Annales de l'Institut Fourier}, volume~70, pages 739--808,
  2020.

\bibitem{fang2009calculation}
J.~Fang.
\newblock Calculation of local {F}ourier transforms for formal connections.
\newblock {\em Science in China Series A: Mathematics}, 52(10):2195--2206,
  2009.

\bibitem{hien2014local}
M.~Hien and C.~Sabbah.
\newblock The local {L}aplace transform of an elementary irregular meromorphic
  connection.
\newblock {\em arXiv preprint arXiv:1405.5310}, 2014.

\bibitem{hiroe2017linear}
K.~Hiroe.
\newblock Linear differential equations on the {R}iemann sphere and
  representations of quivers.
\newblock {\em Duke Mathematical Journal}, 166(5):855--935, 2017.

\bibitem{hiroe2017ramified}
K.~Hiroe.
\newblock Ramified irregular singularities of meromorphic connections and plane
  curve singularities.
\newblock {\em Josai Mathematical Monographs}, 10:161--192, 2017.

\bibitem{hiroe2014moduli}
K.~Hiroe and D.~Yamakawa.
\newblock Moduli spaces of meromorphic connections and quiver varieties.
\newblock {\em Advances in Mathematics}, 266:120--151, 2014.

\bibitem{hohl2022d_modules}
A.~Hohl.
\newblock D-modules of pure {G}aussian type and enhanced ind-sheaves.
\newblock {\em Manuscripta Mathematica}, 167(3-4):435--467, 2022.

\bibitem{jakob2020classification}
K.~Jakob.
\newblock Classification of rigid irregular {$G_2$}-connections.
\newblock {\em Proceedings of the London Mathematical Society},
  120(6):831--852, 2020.

\bibitem{jakob2023wildly}
K.~Jakob.
\newblock Wildly ramified rigid {$G_2$}-local systems.
\newblock {\em Bulletin of the London Mathematical Society}, (5):2539--2556,
  2023.

\bibitem{katz1996rigid}
N.~M. Katz.
\newblock {\em Rigid local systems}.
\newblock Number 139. Princeton University Press, 1996.

\bibitem{malgrange1991equations}
B.~Malgrange.
\newblock {\em Equations diff{\'e}rentielles {\`a} coefficients polynomiaux}.
\newblock Progress in mathematics. Birkh{\"a}user, 1991.

\bibitem{mochizuki2018stokes}
T.~Mochizuki.
\newblock Stokes shells and {F}ourier transforms.
\newblock {\em arXiv preprint arXiv:1808.01037}, 2018.

\bibitem{okamoto1992painleve}
K.~Okamoto.
\newblock The {P}ainlev{\'e} equations and the {D}ynkin diagrams.
\newblock In {\em Painlev{\'e} transcendents}, pages 299--313. Springer, 1992.

\bibitem{sabbah2008explicit}
C.~Sabbah.
\newblock An explicit stationary phase formula for the local formal
  {F}ourier-{L}aplace transform.
\newblock In {\em Singularities {I}}, volume 474 of {\em Contemp. Math.}, pages
  309--330. Amer. Math. Soc., Providence, RI, 2008.

\bibitem{sabbah2016differential}
C.~Sabbah.
\newblock Differential systems of pure {G}aussian type.
\newblock {\em Izvestiya: Mathematics}, 80(1):189--220, 2016.

\bibitem{sabbah2023remarks}
C.~Sabbah.
\newblock Remarks on rigid irreducible meromorphic connections on the
  projective line.
\newblock {\em Documenta Mathematica}, 28(6):1473--1492, 2023.

\bibitem{szabo2005nahm}
S.~Szab{\'o}.
\newblock Nahm transform for integrable connections on the {R}iemann sphere.
\newblock {\em arXiv preprint math/0511471}, 2005.

\end{thebibliography}
\end{document}